\definecolor{darkspringgreen}{rgb}{0.09, 0.45, 0.27}
\def\arxiv#1{\href{http://arxiv.org/abs/#1}{\tt arXiv:#1}} 
\renewcommand{\leq}{\leqslant}
\renewcommand{\geq}{\geqslant}
\newcommand{\Adj}{{\mathrm{Ad}}} \newcommand{\sg}{{\mathsf g}} \newcommand{\sG}{{\mathsf G}}
\newcommand{\BC}{{\mathbb{C}}} \newcommand{\CN}{{\mathcal{N}}}
\newcommand{\rk}{{\mathrm{rk}}}
\newcommand{\fgl}{{\mathfrak{gl}}}
\newcommand{\fg}{{\mathfrak{g}}}
\newcommand{\fsl}{{\mathfrak{sl}}}
\newcommand{\fsp}{{\mathfrak{sp}}}
\newcommand{\fso}{{\mathfrak{so}}} \newcommand{\osp}{{\mathfrak{osp}}} \newcommand{\ff}{{\mathfrak{f}}}
\newcommand{\fu}{{\mathfrak{u}}}
\newcommand{\fz}{{\mathfrak{z}}}
\newcommand{\fq}{{\mathfrak{q}}}
\newcommand{\fh}{{\mathfrak{h}}}
\newcommand{\fT}{{\mathsf{T}}}
\newcommand{\fGL}{{\operatorname{GL}}}
\newcommand{\fMat}{{\operatorname{Mat}}}
\newcommand{\fEnd}{{\operatorname{End}}}
\newcommand{\fSL}{{\operatorname{SL}}}
\newcommand{\fSp}{{\operatorname{Sp}}}
\newcommand{\fSO}{{\operatorname{SO}}}
\newtheorem{thm}{Theorem}[subsection]
\newtheorem{cor}[thm]{Corollary}
\newtheorem{remark}[thm]{Remark}
\newtheorem{lemma}[thm]{Lemma}
\newtheorem{prop}[thm]{Proposition}
\newcommand{\C}{\mathbb C}
\newtheorem{defn}[thm]{Definition}
\theoremstyle{proof}
\renewcommand{\subsection}{\@startsection{subsection}{2}{0pt}{-3ex
plus -1ex minus -0.2ex}{-2mm plus -0pt minus
-2pt}{\normalfont\bfseries}} \makeatother
\numberwithin{equation}{subsection}
\begin{document}

\author{Michael Finkelberg}
\address{Einstein Institute of Mathematics, The Hebrew University of Jerusalem,
  Edmond J. Safra Campus, Givfat Ram, Jerusalem, 91904, Israel;
\newline  National Research University Higher School of Economics;
\newline Skolkovo Institute of Science and Technology}
\email{fnklberg@gmail.com}

\author{Ivan Ukraintsev}
\address{ Section de math\'ematiques, Universit\'e de Gen\`eve, rue du Conseil-G\'en\'eral 7-9, 1205 Gen\`eve, Suisse;
\newline National Research University Higher School of Economics}
\email{v.ukraintsev23@gmail.com}

\title{Hyperspherical equivariant slices and basic classical Lie superalgebras}

\dedicatory{To Hiraku Nakajima on his 60th birthday with admiration}

  \begin{abstract}
    We classify all the hyperspherical equivariant slices of reductive groups. The classification is
    essentially $S$-dual to the one of basic classical Lie superalgebras.
  \end{abstract}
  
\maketitle

\section{Introduction}

\subsection{Hyperspherical varieties}
The study of cotangent bundles of complex spherical varieties goes back to~\cite{K,P}, see a nice survey
in~\cite{V}. It was proved that a $G$-variety $Y$ is spherical iff a typical $G$-orbit in $T^*Y$
is coisotropic; equivalently, if the algebra of invariant rational functions $\BC(T^*Y)^G$ is Poisson
commutative. A systematic study of symplectic varieties $X$ equipped with a Hamiltonian $G$-action
satisfying the above equivalent properties (i.e.\ typical $G$-orbits are coisotropic; equivalently,
the algebra $\BC(X)^G$ is Poisson commutative) was undertaken in~\cite{Lo1}. Such $G$-varieties
are called {\em coisotropic} or {\em multiplicity free}. If certain extra conditions are
satisfied (pertaining to an additional $\BC^\times$-action), such varieties are called
{\em hyperspherical}\footnote{The etymology goes back to an important class of spherical varieties,
namely to the toric varieties. The toric hyperk\"ahler varieties are birational to the cotangent
bundles of toric varieties, and are sometimes called hypertoric.} in~\cite[\S3.5]{bsv}.

\subsection{Equivariant slices}
\label{eqsl}
Let $G$ be a complex reductive group with the Lie algebra $\fg$. Let $e\in\fg$ be a nilpotent
element in an adjoint nilpotent orbit ${\mathbb O}_e\subset\fg$. We include $e$ into an
$\fsl_2$-triple $(e,h,f)$ and obtain a Slodowy slice $S_e=e+\fz_\fg(f)\subset\fg$ to ${\mathbb O}_e$.
Using a $G$-invariant nondegenerate symmetric bilinear form $(-,-)$ on $\fg$, we identify
$\fg$ with $\fg^*$, and $T^*G\cong G\times\fg^*$ with $G\times\fg$. This way we obtain an embedding
$G\times S_e\hookrightarrow T^*G$. According to~\cite{Lo}, the canonical symplectic form $\omega$
on $T^*G$ restricts to a symplectic form on $G\times S_e$ (a particular case of I.~Losev's construction
of {\em model} Hamiltonian varieties).

Let $Q$ be the neutral connected component of the centralizer $Z_G(e,h,f)$ ($Q$ is the maximal
connected reductive subgroup of the centralizer $Z_G(e)$). Then the symplectic {\em equivariant
slice} variety $G\times S_e$ is equipped with a natural Hamiltonian action of $G\times Q$.
Two extreme cases are as follows. First, $e=0,\ Q=G$. We obtain a hyperspherical equivariant
slice $G\times G\curvearrowright T^*G$ (since $G\times G\curvearrowright G$ is one of the basic
examples of spherical varieties). Second, $e$ is a regular nilpotent, $Q$ is trivial.
We obtain a hyperspherical equivariant slice $G\curvearrowright(G\times S_{e_{\operatorname{reg}}})\cong
T^*_\psi(G/U)$ (the twisted cotangent bundle of the base affine space).

\subsection{Triangle parts}
\label{triangle}
Let $G=\fGL_n$, and let $e$ be a nilpotent element of Jordan type $(n-k,1^k)$. The Young diagram
of this partition has a hook form, so such nilpotents are said to have a hook type.
For $k<n-1$, the centralizer of the corresponding $\fsl_2$-triple is $\fGL_k\times\BC^\times$
(the second factor is the center of $\fGL_n$). The action of $\BC^\times$ on $S_e$ being trivial, we
ignore it and set $Q=\fGL_k$ (if $k=n-1$, then $e=0$, and the centralizer of $e$ is $\fGL_n$).
Now $G\times S_e$ is a basic building block (a {\em triangle part}) of the Cherkis-Nakajima-Takayama
{\em bow varieties}~\cite{ch,nt}.
It appeared earlier in the works of J.~Hurtubise and R.~Bielawski as the moduli space of solutions
of certain Nahm equations. In the special case $k=n-1$ we declare $Q:=\fGL_{n-1}$ (embedded as the
upper left block subgroup of the full centralizer $\fGL_n$ of $e=0$) for uniformity. Then
the equivariant slice variety $G\times S_e=T^*\fGL_n$ is a hyperspherical variety of
$G\times Q=\fGL_n\times\fGL_{n-1}$ (since $\fGL_n$ is a spherical $\fGL_n\times\fGL_{n-1}$-variety:
so called Gelfand-Tsetlin case).
There is one more exceptional case: when $k=n$, we can enhance the hyperspherical
$\fGL_n\times\fGL_n$-variety $T^*\fGL_n$ to the hyperspherical $\fGL_n\times\fGL_n$-variety
$T^*(\fGL_n\times\BC^n)$ (cotangent bundle of the spherical $\fGL_n\times\fGL_n$-variety
$\fGL_n\times\BC^n$: so called Rankin-Selberg or mirabolic case).

If $G=\fSO_n$ or $G=\fSp_{2n}$ is another classical group, and $e$ is a nilpotent element of hook type,
then $G\times S_e$ is a basic building block (a triangle part) of the
{\em orthosymplectic bow varieties}~\cite{FHN}. As in the previous paragraph,
there are two special cases. First, when $G=\fSO_n,\ k=n-1$, and $e=0$, we declare $Q:=\fSO_{n-1}$,
and obtain a hyperspherical $\fSO_n\times\fSO_{n-1}$-variety $T^*\fSO_n$ (since $\fSO_n$
is a spherical $\fSO_n\times\fSO_{n-1}$-variety: so called Gelfand-Tsetlin case). Second,
we can enhance the hyperspherical $\fSp_{2n}\times\fSp_{2n}$-variety $T^*\fSp_{2n}$ to the hyperspherical
$\fSp_{2n}\times\fSp_{2n}$-variety $(T^*\fSp_{2n})\times\BC^{2n}$.

\subsection{Classification}
It is easy to check (see~\S\ref{sec:hook}) that all the equivariant slices discussed
in~\S\ref{triangle} are coisotropic $G\times Q$-varieties. A natural question arises to classify
all the nilpotent elements in reductive Lie algebras such that the equivariant slice $G\times S_e$
is a coisotropic $G\times Q$-variety. This is the subject of the present note. The classification
is an easy combinatorial consequence (see~\S\ref{sec:nonhyp}) of the basic necessary condition for
coisotropic property: the dimension of $G\times S_e$ must be at most
$\dim(G\times Q)+\operatorname{rk}(G\times Q)$.

The classification is immediately reduced to the case of (almost) simple $G$
(see~Lemma~\ref{factors}), and then apart from
the equivariant slices discussed in~\S\S\ref{eqsl},\ref{triangle} (and their images under the isomorphisms
of classical groups in small ranks) there are just two more cases. Namely, a nilpotent of Jordan type
$(3,3)$ in $\fsp_6$, and a nilpotent in the 8-dimensional orbit in $\fg_2$, see the first
column of~Table~\ref{tab} and~Theorem~\ref{list}.

\subsection{$S$-duality}
From two different sources, one expects a certain {\em $S$-duality} on the set of hyperspherical
varieties (this duality acts on the groups involved as well). First, this comes from the
$S$-duality of boundary conditions in ${\mathcal N}=4$
super Yang-Mills theory~\cite{gw1,gw2}. Second, this comes from the relative Langlands
duality~\cite{bsv}. For a short introduction see~\cite{N24} or~\cite[\S1.7]{bdfrt}.
For instance, in the extreme cases of~\S\ref{eqsl}, the $S$-dual of $G\times G\curvearrowright T^*G$ is
$G^\vee\times G^\vee\curvearrowright T^*G^\vee$ (Langlands dual group), while the $S$-dual of
$G\curvearrowright T^*_\psi(G/U)$ is $G^\vee\curvearrowright\{0\}$.

According to~\cite{hw,fh} (see~\cite[\S10(viii)]{FHN} for a mathematical exposition),
the $S$-duals of coisotropic equivariant slices are always
symplectic vector spaces equipped with Hamiltonian actions of appropriate reductive groups
(all the coisotropic symplectic representations are classified
in~\cite{Lo1,k}).
It turns out that the $S$-duals of coisotropic equivariant slices are exactly the
symplectic representations arising from basic classical Lie
superalgebras.\footnote{See e.g.~\cite[\S2]{bfgt} for D.~Gaiotto conjectures about
categorical equivalences upgrading the $S$-dualities in these cases.}

Recall that a basic classical Lie superalgebra $\sg=\sg_{\bar0}\oplus\sg_{\bar1}$ is a direct sum of the
ones from the following list: $\fgl(n|k)$, $\osp(m|2n)$, $D(2,1;\alpha)$, $\fg(3)$,
$\ff(4)$~\cite[\S8.3, Theorem 1.3.1]{m}.
The family of simple Lie superalgebras $D(2,1;\alpha)$ is a deformation of $\osp(4|2)$.
The adjoint representation of the reductive group
$\sG_{\bar0}\simeq\fSO_4\times\fSp_2$ whose Lie algebra is the even part $D(2,1;\alpha)_{\bar0}$,
in the odd part $D(2,1;\alpha)_{\bar1}\simeq{\mathbb C}^4_+\otimes{\mathbb C}^2_-$, is independent of
$\alpha$ and coincides with the one arising from $\osp(4|2)$.

Let $\sG_{\bar0}$ be a Lie group with Lie algebra
$\sg_{\bar0}$. It acts naturally on $\sg_{\bar1}$, and we specify the choice of $\sG_{\bar0}$ by the
requirement that this action is effective. For all classical basic Lie superalgebras,
$\sg_{\bar1}$ is equipped with a symplectic structure (coming from the invariant symmetric
bilinear form on $\sg$), and the action of $\sG_{\bar0}$ on $\sg_{\bar1}$ is coisotropic.

Here is the list of expected (proved in certain cases) dualities.
The $S$-dual of $\fGL_N\times\fGL_N\curvearrowright T^*(\fGL_N\times\BC^N)$ is
$\sG_{\bar0}=\fGL_N\times\fGL_N\curvearrowright\sg_{\bar1}$ for $\sg=\fgl(N|N)$.
From now on, to save space, we will simply write for this that the $S$-dual of
$\fGL_N\times\fGL_N\curvearrowright T^*(\fGL_N\times\BC^N)$ is $\fgl(N|N)$.
This is proved in~\cite{bfgt}, as well as the fact that the $S$-dual of
$\fGL_N\times\fGL_{N-1}\curvearrowright T^*\fGL_N$ is $\fgl(N|N-1)$.
More generally, for a nilpotent $e$ of Jordan type $(N-M,1^M)$ in $\fgl_N$, the $S$-dual
of $\fGL_N\times\fGL_M\curvearrowright\fGL_N\times S_e$ is $\fgl(N|M)$ (proved in~\cite{ty}).

Furthermore, the $S$-dual of $\fSO_{2n}\times\fSO_{2n-1}\curvearrowright T^*\fSO_{2n}$ is
$\osp(2n|2n-2)$, and the $S$-dual of $\fSO_{2n+1}\times\fSO_{2n}\curvearrowright T^*\fSO_{2n+1}$
is $\osp(2n|2n)$ (proved in~\cite{bft1}). If $e\in\fso_{2n}$ is a nilpotent of Jordan type
$(2n-k,1^k)$ (note that $k$ is automatically odd), then the $S$-dual of
$\fSO_{2n}\times\fSO_k\curvearrowright\fSO_{2n}\times S_e$ is expected to be $\osp(2n|k-1)$.
If $e\in\fso_{2n+1}$ is a nilpotent of Jordan type
$(2n+1-k,1^k)$ (note that $k$ is automatically even), then the $S$-dual of
$\fSO_{2n+1}\times\fSO_k\curvearrowright\fSO_{2n+1}\times S_e$ is expected to be $\osp(k|2n)$.

Moreover, the $S$-dual of $\fSp_{2n}\times\fSp_{2n}\curvearrowright(T^*\fSp_{2n})\times\BC^{2n}$
is $\osp(2n+1|2n)$ (proved in~\cite{bft2}). If $e\in\fsp_{2n}$ is a nilpotent of Jordan type
$(2n-k,1^k)$ (note that $k$ is automatically even), then the $S$-dual of
$\fSp_{2n}\times\fSp_k\curvearrowright\fSp_{2n}\times S_e$ is expected to be either $\osp(2n+1|k)$
or $\osp(k+1|2n)$ (in this case, due to a certain anomaly, there are two {\em twisted} versions
of $S$-duality, see e.g.~\cite[\S3.1]{bft2}). Namely, in the language of~\cite{fh}
(see also~\cite[\S10(viii)]{FHN}), one has to choose which one of $\fSp_{2n},\fSp_k$ is $\fSp'$,
whose {\em metaplectic} Langlands dual is $\fSp_{2n}$ or $\fSp_k$ respectively (as opposed
to the usual Langlands dual $\fSO_{2n+1}$ or $\fSO_{k+1}$).

Finally, if $e\in\fsp_6$ is a nilpotent of Jordan type $(3,3)$, then
$Q\simeq\mathrm{PGL}_2\subset\mathrm{PSp}_6$, and the $S$-dual of
$\mathrm{PSp}_6\times\mathrm{PGL}_2\curvearrowright\mathrm{PSp}_6\times S_e$ is expected to be $\ff(4)$~\cite[\S3.3]{bft2}.
If $e\in\fg_2$ is an element of the 8-dimensional nilpotent orbit of a short root vector, then $Q\simeq\fSL_2$,
and the $S$-dual of $\mathrm{G}_2\times\fSL_2\curvearrowright\mathrm{G}_2\times S_e$ is expected to be
$\fg(3)$~\cite[\S3.4]{bft2}.

\begin{table}[h]
  \centering
  \begin{tabular}{c|c|c|c}
    & equivariant  & symplectic repre- &
    Lie super-\\
    & slice $H\curvearrowright X$ & sentation $H^\vee\curvearrowright X^\vee$ &
    algebra ${\mathsf g}$\\
    \hline
    1 & ${\mathrm{GL}}_N\times{\mathrm{GL}}_N\curvearrowright$  &
    ${\mathrm{GL}}_N\times{\mathrm{GL}}_N\curvearrowright $  &
    ${\mathfrak g}{\mathfrak l}(N|N)$\\
 &   $T^*({\mathrm{GL}}_N\times{\mathbb C}^N)$ & $T^*{\mathrm{Hom}}({\mathbb C}^N,{\mathbb C}^N)$ & \\
    \hline
 2 & ${\mathrm{GL}}_N\times{\mathrm{GL}}_M\curvearrowright$  &
    ${\mathrm{GL}}_N\times{\mathrm{GL}}_M\curvearrowright $  &
    ${\mathfrak g}{\mathfrak l}(M|N)$\\
  &  ${\mathrm{GL}}_N\times S_{(N-M,1^M)}$ & $T^*{\mathrm{Hom}}({\mathbb C}^M,{\mathbb C}^N)$ & $M<N$ \\
    \hline
 3 & ${\mathrm{Sp}}_{2n}\times{\mathrm{Sp}}_{2n}\curvearrowright$  &
    ${\mathrm{SO}}_{2n+1}\times{\mathrm{Sp}}_{2n}$  &
    ${\mathfrak{osp}}(2n+1|2n)$\\
  &  $(T^*{\mathrm{Sp}}_{2n})\times{\mathbb C}^{2n}$ & $\curvearrowright{\mathbb C}^{2n+1}_+\otimes{\mathbb C}^{2n}_-$ & \\
    \hline
 4 & ${\mathrm{Sp}}_{2n}\times{\mathrm{Sp}}_{2m}\curvearrowright$  &
    ${\mathrm{SO}}_{2m+1}\times{\mathrm{Sp}}_{2n}$  &
    ${\mathfrak{osp}}(2m+1|2n)$\\
 &   ${\mathrm{Sp}}_{2n}\times S_{(2n-2m,1^{2m})}$ & $\curvearrowright{\mathbb C}^{2m+1}_+\otimes{\mathbb C}^{2n}_-$ & $m<n$ \\
    \hline
 5 & ${\mathrm{Sp}}_{2n}\times{\mathrm{Sp}}_{2m}\curvearrowright$  &
    ${\mathrm{SO}}_{2n+1}\times{\mathrm{Sp}}_{2m}$  &
    ${\mathfrak{osp}}(2n+1|2m)$\\
  &  ${\mathrm{Sp}}_{2n}\times S_{(2n-2m,1^{2m})}$ & $\curvearrowright{\mathbb C}^{2n+1}_+\otimes{\mathbb C}^{2m}_-$ & $m<n$ \\
    \hline
 6 & ${\mathrm{SO}}_{2n+1}\times{\mathrm{SO}}_{2m}\curvearrowright$  &
    ${\mathrm{SO}}_{2m}\times{\mathrm{Sp}}_{2n}$  &
    ${\mathfrak{osp}}(2m|2n)$\\
 &   ${\mathrm{SO}}_{2n+1}\times S_{(2n+1-2m,1^{2m})}$ & $\curvearrowright{\mathbb C}^{2m}_+\otimes{\mathbb C}^{2n}_-$ & $m\leq n$\\
    \hline
 7 & ${\mathrm{SO}}_{2n}\times{\mathrm{SO}}_{2m+1}\curvearrowright$  &
    ${\mathrm{SO}}_{2n}\times{\mathrm{Sp}}_{2m}$  &
    ${\mathfrak{osp}}(2n|2m)$\\
  &  ${\mathrm{SO}}_{2n}\times S_{(2n-1-2m,1^{2m+1})}$ & $\curvearrowright{\mathbb C}^{2n}_+\otimes{\mathbb C}^{2m}_-$ & $m<n$ \\
    \hline
 8 & ${\mathrm{PSp}}_6\times{\mathrm{PGL}}_2\curvearrowright$  &
    ${\mathrm{Spin}}_7\times{\mathrm{Sp}}_2$  &
    ${\mathfrak f}(4)$\\
 &   ${\mathrm{PSp}}_6\times S_{(3,3)}$ & $\curvearrowright{\mathbb C}^8_+\otimes{\mathbb C}^2_-$ & \\
    \hline
 9 & $\operatorname{G}_2\times{\mathrm{SL}}(2)\curvearrowright$  &
    $\operatorname{G}_2\times{\mathrm{Sp}}_2$  &
    ${\mathfrak g}(3)$\\
  &  $\operatorname{G}_2\times S_{\operatorname{short}}$ & $\curvearrowright{\mathbb C}^7_+\otimes{\mathbb C}^2_-$ & \\
    \hline  
\end{tabular}
  \caption{Hyperspherical equivariant slices}
  \label{tab}
\end{table}

Note that the relation of $S$-duality with supergroups was already discussed in~\cite{mw}.

\subsection{Acknowledgments}
This note summarizes what we have learned from A.~Braverman, D.~Gaiotto, V.~Ginzburg, A.~Hanany,
D.~Leites, I.~Losev, H.~Nakajima, Y.~Sakellaridis, V.~Serganova, D.~Timashev and R.~Travkin.
We are deeply grateful to all of them. We are also obliged to the anonymous referees for valuable
suggestions that improved the exposition of our note.

The research of M.F.~was supported by the Israel Science Foundation (grant No.~994/24).

\section{Coisotropic equivariant slices}
\label{sec:1}

\subsection{Generalities}
\label{genera}

\begin{defn} Let $(X,\omega)$ be a symplectic variety equipped with an action
  $G\curvearrowright X$ of an algebraic group respecting the symplectic form $\omega$. Then the variety
  $X$ is called a coisotropic variety of $G$ if the algebra of invariant rational functions
  $\C(X)^G$ is Poisson commutative.
\end{defn}

\begin{defn} Let $G$ be a reductive group acting on a symplectic variety $X$, and let
  $\Phi_G\colon X\to\fg^*$ be the moment map. Then the action
  $G\curvearrowright X$ is called symplectically stable if semisimple elements are dense in
  the image $\Phi_G(X)$ (e.g.\ if $\Phi_G(X)=\{0\}$).
\end{defn}

Recall that a subspace $U\subset V$ of a symplectic vector space $V$ is called coisotropic if
it contains its orthogonal complement: $U\supset U^\perp$.

\begin{prop}
\label{prop1}
Let an algebraic group $G$ act on a symplectic variety $(X,\omega)$. 

\textup{(1)}~\cite[Chapter 2, Proposition 5]{V} $X$ is a coisotropic variety of the group $G$ if and only if for a
general point $x\in X$ the tangent space to the orbit $G.x$ at a point x is coisotropic in $T_xX$.

\textup{(2)}~\cite[Proposition 1(1)]{Lo1} If $X$ is a coisotropic variety of the group $G$ then
\[\dim X\leqslant\dim G+\rk(G)=2\dim B,\] where $B$ is a Borel subgroup of $G$.

\textup{(3)}~\cite[Proposition 1(2)]{Lo1} Let $G\curvearrowright X$ be a symplectically stable action.
Then $X$ is a coisotropic variety of the group $G$ if and only if a general point $x\in X$ has the
property
\[\dim X=m_G(X)+\rk(G)-\rk(G_x) \text{ (equivalently, } \dim X=(\dim G+\rk(G))-(\dim G_x+\rk(G_x))),\]
where $G_x\subset G$ is the stabilizer of $x$ in $G$ and $m_G(X)$ is the maximal
dimension of an orbit of the action $G\curvearrowright X$.
\end{prop}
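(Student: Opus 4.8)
Although all three parts are classical, here is the route I would take, working entirely at a generic point and reducing to linear symplectic algebra. Fix a general $x\in X$, write $V:=T_xX$ with the symplectic form $\omega=\omega_x$, and let $W:=T_x(G.x)\subset V$ be the image of the infinitesimal orbit map $\fg\to V,\ \xi\mapsto\xi_X(x)$, whose kernel is $\fg_x:=\operatorname{Lie}(G_x)$; thus $\dim W=\dim G-\dim\fg_x$. For part (1), the key input is that the differentials at $x$ of $G$-invariant (rational) functions span exactly the annihilator of $W$ in $V^*$: this is Rosenlicht's theorem that rational invariants separate generic orbits, so the generic fibre of the quotient by the field of invariants has tangent space $W$. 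Transporting this annihilator to $V$ via $\omega$ identifies it with the symplectic orthogonal $W^\perp$, and for invariant $f,g$ one has $\{f,g\}(x)=\omega^{-1}(df_x,dg_x)$, the symplectic pairing of the two vectors of $W^\perp$ corresponding to $df_x,dg_x$. Hence $\C[X]^G$ is Poisson commutative if and only if this pairing vanishes identically on $W^\perp$, i.e.\ $W^\perp$ is isotropic, i.e.\ $W^\perp\subset W$: precisely coisotropy of $W$. (Reductivity of $G$ and normality of $X$ let one replace rational invariants by regular ones at a general point.)

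For part (2) I would combine coisotropy with the geometry of the moment map $\Phi=\Phi_G$. Infinitesimal equivariance gives $\omega(\xi_X(x),\eta_X(x))=\langle\mu,[\xi,\eta]\rangle$ with $\mu:=\Phi(x)$, so the pullback of $\omega$ to $\fg$ is the Kirillov–Kostant–Souriau form at $\mu$, whose radical is the centralizer $\fg_\mu$. Pushing down to $W\cong\fg/\fg_x$, the radical of $\omega|_W$ is $W\cap W^\perp$, and one reads off $\dim(W\cap W^\perp)=\dim\fg_\mu-\dim\fg_x$. Geometrically this is the number of directions of $\Phi(X)$ transverse to the coadjoint orbit $G.\mu$ (here $\operatorname{rank}\Phi=\dim W$), hence at most $\dim(\fg^*/\!/G)=\rk G$. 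Coisotropy turns $W\cap W^\perp$ into $W^\perp$, so $\dim W^\perp\le\rk G$, and therefore
\[\dim X=\dim W+\dim W^\perp\le\dim G+\rk G=2\dim B.\]

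Part (3) is the same computation pushed to an equality under symplectic stability. Since semisimple elements are dense in $\Phi(X)$, a general $\mu$ is semisimple, $\fg_\mu$ is a Levi of full rank $\rk\fg_\mu=\rk G$, and $G_x\subset G_\mu$ is reductive. The always-valid identities $\dim X=\dim W+\dim W^\perp$, $\dim W=m_G(X)=\dim G-\dim\fg_x$ and $\dim(W\cap W^\perp)=\dim\fg_\mu-\dim\fg_x$ reduce the hyperspherical condition $W^\perp\subset W$ to the single equality $\dim W^\perp=\dim(W\cap W^\perp)$, which is exactly the stated formula once one knows $\dim(W\cap W^\perp)=\rk G-\rk G_x$. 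Establishing this last identity is the crux: I would obtain it from the Marle–Guillemin–Sternberg symplectic slice theorem, writing $X$ near $G.x$ in normal form $G\times_{G_x}(\mathfrak m^*\oplus N)$ with $\mathfrak m=\fg/\fg_x$ and $N$ the symplectic slice representation of $G_x$, and then matching the transverse-to-coadjoint-orbit directions with the Cartan data of the reductive pair $G_x\subset G_\mu$. The main obstacle is precisely this identification of the coisotropic defect $\dim(W\cap W^\perp)$ with $\rk G-\rk G_x$: it is where semisimplicity of $\mu$ (guaranteed only by symplectic stability) and reductivity of the generic stabilizer are genuinely used, while everything else is formal linear symplectic algebra.
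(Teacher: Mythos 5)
First, note that the paper itself contains no proof of this Proposition: all three parts are imported verbatim from the literature (part (1) from Vinberg, parts (2) and (3) from Losev, Proposition 1), so there is no internal argument to measure yours against; your proposal has to stand on its own. Parts (1) and (2) of your sketch follow the standard route and are essentially sound. For (1), the Rosenlicht argument really proves the statement for \emph{rational} invariants; the passage to $\C[X]^G$ is harmless for the affine $X$ and reductive $G$ occurring in this paper, but you should be aware that the literal statement fails for a general symplectic variety with $\C[X]^G=\C$. For (2), your identities $\ker d\Phi_x=W^\perp$ and $\dim(W\cap W^\perp)=\dim\fg_\mu-\dim\fg_x=\dim\overline{\Phi(X)}-\dim G\cdot\mu$ are correct, but the final inequality ``number of transverse directions $\le\dim(\fg^*/\!/G)$'' is not automatic for an arbitrary $G$-stable family of coadjoint orbits: it uses that every fibre of the adjoint quotient $\fg^*\to\fg^*/\!/G$ is a finite union of orbits (Kostant). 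Say this explicitly; with it, (2) is complete.

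The genuine gap is in part (3), and you have correctly located it but not closed it. Your reduction shows that the entire equivalence rests on the single identity $\dim(W\cap W^\perp)=\rk G-\rk G_x$ for general $x$ under symplectic stability. Unwinding it via $\dim(W\cap W^\perp)=\dim\fg_\mu-\dim\fg_x$ and $\rk\fg_\mu=\rk\fg$ (as $\mu$ is semisimple, $\fg_\mu$ is a Levi), the identity says $\dim\fg_\mu-\rk\fg_\mu=\dim\fg_x-\rk\fg_x$, i.e.\ that the generic stabilizer is reductive and absorbs the whole derived algebra $[\fg_\mu,\fg_\mu]$, differing from $\fg_\mu$ only in the central torus. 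This is a nontrivial structural theorem about generic stabilizers of symplectically stable Hamiltonian actions (it is precisely the input from Knop's theory that Losev's proof relies on), not formal linear symplectic algebra, and the Marle--Guillemin--Sternberg normal form $G\times_{G_x}((\fg/\fg_x)^*\oplus N)$ does not hand it to you for free: one must still control the slice representation $N$ of $G_x$ and show the transverse directions to $G\cdot\mu$ inside $\Phi(X)$ are exactly a Cartan-type piece of dimension $\rk G-\rk G_x$. That the hypothesis of symplectic stability enters exactly here is visible in the example $X=T^*(G/B)$ for $G=\fSL_2$: there $\mu$ is a regular nilpotent, $\dim(W\cap W^\perp)=0$ while $\rk G-\rk G_x=1$, and the equivalence of (3) fails ($X$ is coisotropic but $\dim X\ne m_G(X)+\rk G-\rk G_x$). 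Until this identity is proved (or explicitly cited from Knop/Losev), your argument for (3) is a plan rather than a proof.
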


Let $G$ be a reductive group with Lie algebra $\mathfrak{g}$ and let
$e\in \mathfrak{g}$ be a nilpotent element. Choose an $\mathfrak{sl}_2$-triple $(e,f,h)$. Then $S_e = e+\mathfrak{z}_{\mathfrak{g}}(f)$ is a Slodowy slice to the adjoint nilpotent orbit $G.e$.
Using a $G$-invariant symmetric bilinear form $(-,-)$ on $\mathfrak{g}$, we view $e$ as an element
$e^*\in\mathfrak{g}^*$, and we view $S_e$ as a slice
$S_e=e^*+ (\mathfrak{g}/[\mathfrak{g},f])^*\subset\mathfrak{g}^*$. So we have an embedding
$G\times S_e\subset G\times \mathfrak{g}^*\simeq T^*G$. Here we identify $T^*G$ with
$G\times \fg^*$ by using left $G$-invariant 1-forms on $G$. Then the action $G\curvearrowright T^*G$
by left (resp.\ right) translations has the following form: $g.(h,\xi)=(gh,\xi)$ (resp.\
$g.(h,\xi)=(hg^{-1},\mathrm{Ad}^*_g(\xi)$).
On $T^*G$ we have a canonical symplectic form $\omega$. Its restriction to $G\times S_e$ is also
denoted $\omega$. To write down an explicit formula for the form $\omega$ on $G\times S_e$, we
return back to the initial point of view $G\times S_e\subset G\times\mathfrak{g}$. Then
at a point $(1_G,x)\in G\times S_e$ we have
\[\omega_x(\xi+u,\eta+v)=(x,[\xi,\eta])+(u,\eta)-(v,\xi),\]
where $\xi,\eta\in\mathfrak{g},\ v,u\in\mathfrak{z}_{\mathfrak{g}}(f)\subset \mathfrak{g}$.
By~\cite[Lemma 2]{Lo} (applied in the special case $H=\{1\}$ and $V=0$ in the notation of
{\em loc.cit.}), the form $\omega$ on $G\times S_e$ is non-degenerate.
From now on we will identify $\fg^*$ with $\fg$ (and $\Adj_g^*$ with $\Adj_g$, as well as
$T^*G$ with $G\times\fg$) using $(-,-)$.

Let $Q$ be the neutral connected component of the centralizer $Z_G(e,f,h)$ and let $\fq$ be its Lie
algebra. Then we have a symplectic action
$G\times Q\curvearrowright G\times S_e\colon (g_1,q).(g_2, \xi)=(g_1g_2q^{-1},\Adj_q(\xi))$, where
$g_1,g_2\in G,\ q\in Q,\ \xi\in S_e\subset\fg$. We want to classify all coisotropic varieties of
type $G\times S_e$ with the action of $G\times Q$ as above for reductive $G$.

\begin{lemma}
\label{le1}
The action $G\times Q\curvearrowright G\times S_e$ is symplectically stable.
\end{lemma}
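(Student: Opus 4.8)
The plan is to compute the moment map $\Phi=\Phi_{G\times Q}\colon G\times S_e\to\fg^*\oplus\fq^*\cong\fg\oplus\fq$ explicitly and then to produce a dense open subset of $G\times S_e$ on which $\Phi$ takes semisimple values; since the image of a dense subset under a morphism is dense in the image, this will show that semisimple elements are dense in $\Phi(G\times S_e)$, which is exactly symplectic stability. With the identifications $T^*G\cong G\times\fg$ and $\fg^*\cong\fg$ fixed above, the standard moment map for the left and right translation actions of $G\times G$ on $T^*G$ reads $(h,\xi)\mapsto(\Adj_h\xi,-\xi)$ (up to the usual sign conventions, which are immaterial here as they do not affect semisimplicity). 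Restricting the right factor to $Q\subset G$ and composing with the dual of $\fq\hookrightarrow\fg$ gives
\[\Phi(h,\xi)=\bigl(\Adj_h\xi,\,-\operatorname{pr}_{\fq}(\xi)\bigr),\]
where $\operatorname{pr}_{\fq}\colon\fg\to\fq$ is the orthogonal projection for $(-,-)$ (whose restriction to the reductive subalgebra $\fq$ is nondegenerate).

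Next I would treat the two components separately. The first, $\Adj_h\xi$, is semisimple exactly when $\xi$ is, and here the input is the classical fact from Slodowy theory that the adjoint quotient $\chi\colon\fg\to\fg/\!/G$ restricts to a faithfully flat, in particular dominant, map on $S_e$; consequently the generic element of $S_e$ is regular semisimple, so $S_e$ meets the regular semisimple locus in a dense open subset $U_1$, and for $\xi\in U_1$ the element $\Adj_h\xi$ is regular semisimple for every $h\in G$. For the second component, note that $e$ has $\operatorname{ad}_h$-weight $2$ whereas $\fq$ lies in the zero weight space of $\operatorname{ad}_h$, so $(e,\fq)=0$ and hence $\operatorname{pr}_{\fq}(e)=0$. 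Since moreover $\fq\subseteq\fz_{\fg}(f)$, the affine subspace $e+\fq\subset S_e$ is carried isomorphically onto $\fq$ by $\operatorname{pr}_{\fq}$; thus $\operatorname{pr}_{\fq}\colon S_e\to\fq$ is a surjective affine map. As the semisimple locus of the reductive Lie algebra $\fq$ is open and dense, its preimage $U_2\subset S_e$ is open and dense, and $-\operatorname{pr}_{\fq}(\xi)$ is semisimple in $\fq$ for every $\xi\in U_2$.

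Finally I would combine the two conditions. Since $S_e=e+\fz_{\fg}(f)$ is an affine space, hence irreducible, the intersection $U:=U_1\cap U_2$ of the two dense open subsets is again dense and open. For $(h,\xi)\in G\times U$ both components of $\Phi(h,\xi)$ are semisimple, so $\Phi(h,\xi)$ is a semisimple element of $\fg\oplus\fq=\operatorname{Lie}(G\times Q)$. As $G\times U$ is dense in $G\times S_e$ and $\Phi$ is a morphism, $\Phi(G\times U)$ is dense in $\Phi(G\times S_e)$; being contained in the semisimple locus, this proves that semisimple elements are dense in the image, as required. I expect the only genuinely nontrivial ingredient to be the Slodowy fact that the generic element of $S_e$ is regular semisimple (equivalently, the dominance of $\chi|_{S_e}$); the moment map computation and the surjectivity of $\operatorname{pr}_{\fq}|_{S_e}$ are elementary, and the concluding step is merely the remark that a finite intersection of dense open subsets of an irreducible variety is dense.
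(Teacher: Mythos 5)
Your proposal is correct and follows essentially the same route as the paper: compute $\Phi_{G\times Q}(g,\xi)=(\Adj_g\xi,\pm\operatorname{pr}_{\fq}\xi)$, show each component is semisimple on a dense subset, and intersect. The only difference is that you spell out what the paper dismisses as ``easy to see'' --- namely that generic elements of $S_e$ are regular semisimple (via dominance of the adjoint quotient on the slice) and that $\operatorname{pr}_{\fq}|_{S_e}$ is surjective because $(e,\fq)=0$ and $e+\fq\subset S_e$ --- which is a welcome addition rather than a deviation.
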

\begin{proof}Note that the restriction $(-,-)|_{\fq}$ to $\fq$ is also nondegenerate. So
  $\fz_{\fg}(f)=\fu\oplus \fq$ where $\fu$ is the orthogonal complement to $\fq$. Let
  $\pi\colon S_e\rightarrow \fq$ be the corresponding projection. Then
\[\Phi_{G\times Q}(g,\xi)=(\Phi_{G}(g,\xi),\Phi_{ Q}(g,\xi))=(\Adj_{g}(\xi),\pi(\xi)),\]
where $\Phi_{G\times Q}\colon G\times S_e\rightarrow \fg\oplus\fq,\
\Phi_{G}\colon G\times S_e\rightarrow \fg,\ \Phi_{Q}\colon G\times S_e\rightarrow \fq$
are the moment maps of the actions $G\times Q\curvearrowright G\times S_e,\
G\curvearrowright G\times S_e,\ Q\curvearrowright G\times S_e$ respectively. Let
$pr_{\fg}\colon \fg\oplus\fq\rightarrow \fg$ and $pr_{\fq}\colon \fg\oplus\fq\rightarrow \fq$
be the natural projections. Since $S_e$ contains a dense open subset of regular semisimple
elements, the images $\Phi_{G}(G\times S_e),\
\Phi_{Q}(G\times S_e)$ contain nonempty Zariski open subsets $U_{\fg}$ and $U_{\fq}$ consisting of
semisimple elements respectively. Then $pr_{\fg}^{-1}(U_{\fg})\cap pr_{\fq}^{-1}(U_{\fq})\cap
\Phi_{G\times Q}(G\times S_e)$ is a nonempty Zariski open subset in $\Phi_{G\times Q}(G\times S_e)$
consisting of semisimple elements.
\end{proof}
The next Corollary follows immediately from~Lemma~\ref{le1} and~Proposition~\ref{prop1}(3).

\begin{cor}
\label{cor1}
Consider the action $G\times Q\curvearrowright G\times S_e$ as above. Assume that the stabilizer $Q_p$ of a general point $p\in S_e$ is finite. Then $G\times S_e$ is a coisotropic variety of the group $G\times Q$ if and only if \[\dim G\times S_e = \dim G\times Q + \rk(\fg\oplus\fq).\]
\end{cor}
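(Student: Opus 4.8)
The plan is to derive Corollary~\ref{cor1} directly from Proposition~\ref{prop1}(3) by computing the three quantities appearing in its criterion: the rank of the acting group, the rank of the generic stabilizer, and the maximal orbit dimension. Since Lemma~\ref{le1} has already established that the action $G\times Q\curvearrowright G\times S_e$ is symplectically stable, Proposition~\ref{prop1}(3) applies verbatim, and the action is hyperspherical if and only if
\[
\dim(G\times S_e)=m_{G\times Q}(G\times S_e)+\rk(G\times Q)-\rk\big((G\times Q)_y\big)
\]
for a general point $y$. The group $G\times Q$ is reductive (recall $Q$ is the maximal connected reductive subgroup of $Z_G(e)$), so $\rk(G\times Q)=\rk\fg+\rk\fq=\rk(\fg\oplus\fq)$. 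Thus the whole proof reduces to evaluating $m_{G\times Q}$ and $\rk((G\times Q)_y)$ under the hypothesis that the generic $Q$-stabilizer on $S_e$ is finite.

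First I would compute the stabilizer of a general point $y=(1_G,x)\in G\times S_e$, which suffices by $G\times Q$-invariance. From the action formula $(g_1,q).(g_2,\xi)=(g_1g_2q^{-1},\Adj_q\xi)$, an element $(g_1,q)$ fixes $(1_G,x)$ iff $g_1=q$ and $\Adj_q(x)=x$, i.e.\ iff $q\in Q_x$ and $g_1=q$. Hence $(G\times Q)_{(1_G,x)}\cong Q_x$ embedded diagonally. By hypothesis $Q_x$ is finite for general $x$, so $(G\times Q)_y$ is finite and $\rk((G\times Q)_y)=0$. This is where the finite-stabilizer assumption enters, and it is the one genuinely load-bearing input; the rest is bookkeeping.

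Next I would show that the generic orbit is open-dense, so that $m_{G\times Q}(G\times S_e)=\dim(G\times S_e)-\dim\big((G\times Q)\backslash(G\times S_e)\big)$ collapses to the expected value. Concretely, since the stabilizer of a general point is finite, the general orbit has dimension $\dim(G\times Q)$, which gives $m_{G\times Q}(G\times S_e)=\dim(G\times Q)$ directly, without any quotient-dimension count. Substituting $\rk((G\times Q)_y)=0$, $\rk(G\times Q)=\rk(\fg\oplus\fq)$, and $m_{G\times Q}=\dim(G\times Q)$ into the criterion of Proposition~\ref{prop1}(3), the hyperspherical condition becomes exactly
\[
\dim(G\times S_e)=\dim(G\times Q)+\rk(\fg\oplus\fq),
\]
which is the assertion of the Corollary.

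I expect no serious obstacle here: the content is entirely front-loaded into Lemma~\ref{le1} and the finite-stabilizer hypothesis, and the corollary is a mechanical specialization of Proposition~\ref{prop1}(3). The only point deserving care is the identification $m_{G\times Q}(G\times S_e)=\dim(G\times Q)$, i.e.\ confirming that the maximal orbit dimension equals the group dimension; this follows because a finite generic stabilizer forces the generic orbit to have full dimension $\dim(G\times Q)$, and the maximal orbit dimension is achieved on a dense open set.
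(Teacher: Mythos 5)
Your proposal is correct and follows exactly the route the paper takes: the paper derives Corollary~\ref{cor1} immediately from Lemma~\ref{le1} (symplectic stability) together with Proposition~\ref{prop1}(3), which is precisely your specialization with $\rk((G\times Q)_y)=0$ and $m_{G\times Q}(G\times S_e)=\dim(G\times Q)$. Your explicit identification of the stabilizer of $(1_G,x)$ with $Q_x$ embedded diagonally is a correct filling-in of the details the paper leaves implicit.
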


\begin{lemma}
  \label{factors}
An equivariant slice $G\times S_e$ is a coisotropic variety of $G\times Q$ if and only
if the corresponding equivariant slices are coisotropic for all the (almost) simple factors of $G$.
\end{lemma}

\begin{proof}
The lemma is a consequence of the following three easy statements.

1) Let a reductive group $G$ be a direct product $G=G'\times T$ for a torus $T$, and accordingly
the Lie algebra $\fg=\fg'\oplus{\mathfrak t}$. Consider a nilpotent element $e\in\fg$ of the form
$e=(e',0)$, $e'\in\fg'$. Then the subgroup $Q=Z_G(e,f,h)$ is a direct product $Q=Q'\times T$, where
$Q'=Z_{G'}(e',f',h')$, and $G\times S_e$ is a coisotropic variety of $G\times Q$ iff $G'\times S_{e'}$
is a coisotropic variety of $G'\times Q'$.

2) More generally, let a reductive group $G$ be a direct product $G=G'\times G''$, and accordingly
the Lie algebra $\fg=\fg'\oplus\fg''$. Consider a nilpotent element $e\in\fg$ of the form $e=(e',e'')$,
$e'\in\fg'$, $e''\in\fg''$. Then the subgroup $Q=Z_G(e,f,h)$ is a direct product $Q=Q'\times Q''$,
where $Q'=Z_{G'}(e',f',h')$, $Q''=Z_{G''}(e'',f'',h'')$, and $G\times S_e$ is a coisotropic variety
of $G\times Q$ iff $G'\times S_{e'}$ (resp.' $G''\times S_{e''}$) is a coisotropic variety of
$G'\times Q'$ (resp.\ $G''\times Q''$).

3) Let $p\colon G\twoheadrightarrow G'$ be an isogeny (so that $\fg=\fg'$). Then $Q'=Z_{G'}(e,f,h)$
is the image $Q'=p(Q)$ of $Q=Z_G(e,f,h)$. Moreover, $G\times S_e$ is a coisotropic variety of
$G\times Q$ iff $G'\times S_e$ is a coisotropic variety of $G'\times Q'$.
\end{proof}

\begin{remark}
{\em In particular, $\fGL_N\times S_e$ is coisotropic for $\fGL_N\times Q$
(where $Q=Z_{\fGL_N}(e,f,h)$) iff $\fSL_N\times S'_e$ is coisotropic for $\fSL_N\times Q'$
(where $Q'=Z_{\fSL_N}(e,f,h)$, and $S'_e$ is the Slodowy slice in $\mathfrak{sl}_N$).}
\end{remark}

\begin{thm}
\label{list}  
  An equivariant slice $G\times Q\curvearrowright G\times S_e$ is hyperspherical if and only if all the
  (almost) simple factors $G_i$ of $G$ (and the corresponding summands $e_i$ of $e$) are of the following
  types:

  \textup{(1)} $G_i$ arbitrary (almost) simple, $e\in{\mathfrak g}_i$ is a regular nilpotent (so that $Q_i$ is trivial);

  \textup{(2)} $G_i$ arbitrary (almost) simple, $e=0\in{\mathfrak g}_i$ (so that $Q_i=G_i$);

  \textup{(3)} $G_i$ is isogenous to $\fSL_N$, and $e_i$ is of hook type $(N-M,1^M)$ for $0<M<N$,
  cf.\ the second row of~Table~\ref{tab};

  \textup{(4)} $G_i$ is isogenous to the first factor in the left column in the rows~4--9 of~Table~\ref{tab},
  and $e_i$ is the corresponding nilpotent element ibid.
\end{thm}

The proof is case by case and occupies the rest of the note. Namely, by~Lemma~\ref{factors}, the proof
reduces to the case of an (almost) simple $G$. In the rest of~\S\ref{sec:1} we check
that the slices listed in~Theorem~\ref{list} are coisotropic. Then in~\S\ref{sec:nonhyp} we check that
all the other slices are not coisotropic.

\begin{remark}
\textup{(a)} {\em Among the conditions~\cite[\S3.5.1.(1-5)]{bsv} of hypersphericity we check the most important
coisotropy condition~(2). The conditions~(1) and~(3) are automatic. The condition~(5) is satisfied for
the ${\mathbb G}_{gr}$-action arising from the action on $S_e$ of the Cartan torus of $\fSL_2$ corresponding
to the $\fsl_2$-triple $(e,f,h)$. Finally, the condition~(4) is} not {\em necessarily satisfied e.g.\ if
in the row~8 of~Table~\ref{tab} we consider an isogenous group
$\operatorname{Sp}_6\times\fSL_2\curvearrowright\operatorname{Sp}_6\times S_{(3,3)}$. However, we allow
ourselves this small digression from the definition of~\cite[\S3.5.1.(1-5)]{bsv}.}

\textup{(b)} A posteriori, {\em from the classification of~Theorem~\ref{list}, it follows that for the
coisotropy property of equivariant slices $X$, the necessary condition of~Proposition~\ref{prop1}(2) turns
out to be sufficient as well. We owe this remark to an anonymous referee.}
\end{remark}

\subsection{Hook nilpotents}
\label{sec:hook}
We describe the nilpotent elements in classical Lie algebras with Jordan type given by
a partition $(n-k,1^k)$ whose Young diagram has a hook form. 
Let $W=\BC^k,\ U=\BC^{n-k}$, and $V=U\oplus W$.
We view $U$ as an irreducible $\fsl_2$-module with weight vectors $u_1,u_2,...,u_{n-k}$,
where $u_1$ is the highest weight vector and $u_{n-k}$ is the lowest one. Denote the corresponding
$\fsl_2$-triple by $e',f',h'\in \fgl(U)$. If $n-k$ is even (resp.\ odd) then $U$ admits a unique
$\fsl_2$-invariant nondegenerate symplectic (resp.\ orthogonal) form $(-,-)$ such that
$(u_1,u_{n-k})=1$. Let us extend this symplectic (resp.\ orthogonal) form $(-,-)$ to a nondegenerate
symplectic (resp.\ orthogonal) form on $V$ in such a way that $W=U^\perp$. Let $G(V)\subset \fGL(V)$
denote the group preserving the form $(-,-)$ and let $\fg(V)$ be its Lie algebra.

In this section $G=G(V)$ or $G=\fGL(V)$,
and $e=(e',0)\in \fg(U)\oplus \fg(W)\subset \fg(V)\subset \fgl(V)$ is a nilpotent element of hook
Jordan type $(n-k,1^k)$. Furthermore, $e=(e',0),\ f=(f',0),\ h=(h',0)$ is the corresponding
$\fsl_2$-triple. Finally, $Q=G(W)$, or $Q=(\BC^\times\cdot1_U)\times\fGL(W)$.

\begin{lemma}
\label{lemhook}

\textup{(1)}~If $G=G(V)$, then the stabilizer $Q_p$ of a general point $p\in S_e$ is finite.

\textup{(2)}~If $G=\fGL(V)$, then the stabilizer $\fGL(W)_p$ of a general point $p\in S_e$ is finite.
\end{lemma}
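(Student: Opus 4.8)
The plan is to understand the structure of the Slodowy slice $S_e$ very concretely and then compute the generic stabilizer directly. Recall that $S_e = e + \fz_\fg(f)$, and since $e=(e',0)$ is the hook nilpotent with $U$ the single large Jordan block and $W$ the $\fsl_2$-trivial part, the centralizer $\fz_\fg(f)$ splits into pieces according to the decomposition $V = U \oplus W$. The key structural input I would use is that $\fz_\fg(f)$ (equivalently $\fz_\fg(e)$) for a hook nilpotent is well understood: it contains $\fg(W)$ (the part acting only on $W$, which is $\fq$ up to center), a $W$--$U$ interaction part built from the lowest and highest weight vectors of $U$, and scalars/nilpotents supported on $U$. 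So the first step is to write a general element $p \in S_e$ explicitly as $p = e + u + r$ with $r \in \fq$ and $u$ in the orthogonal complement, and to describe the action of $q \in Q$ on the $W$--$U$ interaction coordinates.

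Second, I would reduce the problem to the action of $Q$ on these "interaction" coordinates, since the $\fq$-component $r$ transforms by the adjoint action and a generic $r$ is regular semisimple in $\fq$, cutting the stabilizer down to a maximal torus of $Q$ at best. The decisive constraint comes from the off-diagonal blocks coupling $W$ to the extreme weight spaces of $U$: these transform as the standard (or defining) representation of $Q = G(W)$ or $\fGL(W)$, possibly tensored with the one-dimensional extreme weight spaces of $U$. The heart of the argument is that at a generic point these $W$-vector coordinates, together with the generic regularity of $r\in\fq$, force the simultaneous stabilizer in $Q$ to be trivial (finite). Concretely, a generic vector in the standard representation of an orthogonal/symplectic/general-linear group of $W$ has stabilizer that is the isometry group of a hyperplane or a Levi, and intersecting this with the stabilizer of a regular $r$ kills everything down to a finite group.

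The main obstacle I anticipate is the bookkeeping in the two genuinely different regimes. When $k$ is large relative to $n-k$ (so $W$ is big), the interaction coordinates live in a single copy of the standard $Q$-representation, and I must check this single vector already has finite stabilizer when combined with regular $r$ — this is the tightest case and may require using both the extreme weight vectors $u_1, u_{n-k}$ of $U$ to supply enough coordinates (one copy from the highest, one from the lowest weight space). The orthogonal and symplectic cases differ in whether $n-k$ is even or odd, which flips the form on $U$ between symplectic and orthogonal, and this changes which block of $\fz_\fg(f)$ the interaction sits in; I would handle these by parity cases. The separate treatment of (1) $G=G(V)$ with $Q=G(W)$ and (2) $G=\fGL(V)$ with $Q = (\BC^\times\cdot 1_U)\times\fGL(W)$ is then a matter of noting that in case (2) the extra central $\BC^\times$ acts nontrivially on the interaction coordinates (scaling the $U$-side) and is pinned down by a nonzero interaction vector, while the $\fGL(W)$-part is handled as above.

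Finally, I would note the degenerate boundary values $k=n-1$ and $k=n$ (and, for the orthogonal/symplectic cases, the small-rank coincidences) separately, since there $e=0$ or $U$ degenerates and the interaction description collapses; in those cases the finiteness of the generic stabilizer follows directly from the classical fact that a generic element of $\fg(V)$ or of the standard representation already has finite (indeed trivial connected) stabilizer in $Q$. Assembling these pieces gives the claimed finiteness in both (1) and (2).
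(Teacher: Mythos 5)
Your proposal is correct and follows essentially the same route as the paper: the paper exhibits inside $\fz_\fg(f)$ a $Q$-submodule $L\oplus\fq$ with $L$ the tautological representation (your ``$W$--$U$ interaction'' block built from $u_1$ and $u_{n-k}$) and $\fq$ the adjoint representation, and concludes finiteness of the generic stabilizer exactly as you do by combining a regular element of $\fq$ with a generic tautological vector. The paper's write-up is just more economical, defining $L$ by explicit conditions on $\xi(u_1)$ and $\xi(W)$ and skipping the parity and boundary case distinctions, which your argument shows are not needed.
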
 
\begin{proof} Assume that $G=G(V)$. Consider a vector space $L$ consisting of all elements
  $\xi\in \fgl(V)=\fEnd(V)$ with the following properties:
\[\xi(u_1)\in W,\ 
\xi(u_i)=0\ \forall i\neq1,\
\xi(W)\subset\C\langle u_{n-k}\rangle,\
(\xi(u_1),w)=-(u_1,\xi(w))\ \forall w\in W.\]
It is easy to check that $L\subset \fg(V)$ and $L\subset \fz_{\fg(V)}(f)\subset\fz_{\fg}(f)$. Note that
$L$ is isomorphic to the $k$-dimensional tautological $\fq$-module. In particular, the $\fq$-module
$\fz_{\fg}(f)$ contains $L\oplus \fq$, where $\fq$ is the adjoint representation. So the stabilizer
of a general point of $S_e$ in $Q$ is finite.

If $G=\fGL(V)$, consider $L$ consisting of all elements $\xi\in \fgl(V)=\fEnd(V)$ with the following
properties:
\[\xi(u_1)\in W,\
\xi(u_i)=0\ \forall i\neq1,\
\xi(W)\subset\C\langle u_{n-k}\rangle.\]
Then $L\subset\fz_{\fgl(V)}(f)$ is isomorphic to the $\fGL(W)$-module $W\oplus W^*$. So as before,
the stabilizer of a general point of $S_e$ in $\fGL(W)$ is finite.
\end{proof}

\subsubsection{Hook nilpotents in $\fgl_n$}
\label{sec:hookgln}
Let $G=\operatorname{GL}_n$ and let $e\in \mathfrak{gl}_n$ be a nilpotent element of Jordan
type $(n-k,1^k),\ k\neq0$. 
\begin{prop} 
\label{hookGL}
$\operatorname{GL}_n\times S_e$ is a coisotropic variety of the group $\operatorname{GL}_n\times \operatorname{GL}_k$.
\end{prop}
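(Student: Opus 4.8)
The plan is to apply Corollary~\ref{cor1}, which reduces the hyperspherical property to a single dimension count, provided the stabilizer of a general point of $S_e$ in $Q=\fGL_k$ is finite. The finiteness of this stabilizer is exactly the content of Lemma~\ref{lemhook}(2), which I may invoke directly. So the entire proposition collapses to verifying the numerical identity
\[\dim(\fGL_n\times S_e)=\dim(\fGL_n\times\fGL_k)+\rk(\fgl_n\oplus\fgl_k).\]

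**First I would** compute each term explicitly. Since $\dim S_e=\dim\fz_\fg(f)=\dim\fg-\dim{\mathbb O}_e$, and $\dim(\fGL_n\times S_e)=\dim\fGL_n+\dim S_e=2\dim\fg-\dim{\mathbb O}_e$, I need the dimension of the nilpotent orbit of Jordan type $(n-k,1^k)$ in $\fgl_n$. The standard formula for $\dim{\mathbb O}_e$ in $\fgl_n$ is $n^2-\sum_i m_i^2$, where the $m_i$ are the sizes of the Jordan blocks of the transpose (dual) partition; equivalently, $\dim\fz_{\fgl_n}(e)=\sum_i(\lambda^t_i)^2$ for the dual partition $\lambda^t$. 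For the hook $\lambda=(n-k,1^k)$, the dual partition is $\lambda^t=(k+1,1^{\,n-k-1})$, so $\dim\fz_{\fgl_n}(e)=(k+1)^2+(n-k-1)=n^2-(2nk-2k^2-n+2k)$ after simplification; I would carry out this arithmetic carefully. On the other side, $\dim(\fGL_n\times\fGL_k)=n^2+k^2$ and $\rk(\fgl_n\oplus\fgl_k)=n+k$, so the right-hand side equals $n^2+k^2+n+k$.

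**The remaining step** is the elementary algebraic check that the two sides agree. Rewriting the left-hand side as $\dim\fGL_n+\dim\fz_{\fgl_n}(f)=n^2+\dim\fz_{\fgl_n}(e)$ (since $e$ and $f$ have the same Jordan type and hence conjugate centralizers), the identity becomes
\[n^2+\dim\fz_{\fgl_n}(e)=n^2+k^2+n+k,\]
that is, $\dim\fz_{\fgl_n}(e)=k^2+k+n$. I would confirm this from the dual-partition formula above and thereby conclude.

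**The main obstacle** is not conceptual but bookkeeping: getting the combinatorial formula for $\dim\fz_{\fgl_n}(e)$ exactly right for the hook partition, including the $k=0$ and $k=n-1$ boundary behavior, and matching it cleanly against $k^2+k+n$. Everything else is supplied by the machinery already assembled---Lemma~\ref{lemhook}(2) for finiteness and Corollary~\ref{cor1} for the criterion---so the proof is genuinely a one-line dimension verification once these are cited.
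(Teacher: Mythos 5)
Your proposal is correct and follows essentially the same route as the paper: finiteness of the generic stabilizer from Lemma~\ref{lemhook}(2), the criterion of Corollary~\ref{cor1}, and the dimension count via the dual partition $(k+1,1^{n-k-1})$, giving $\dim\fz_{\fgl_n}(e)=(k+1)^2+(n-k-1)=k^2+k+n$, which matches the paper's $\dim S_e=(n-k)+k+k+k^2$. The only blemish is the intermediate expression $n^2-(2nk-2k^2-n+2k)$, which does not equal $(k+1)^2+(n-k-1)$; since you nonetheless land on the correct identity $\dim\fz_{\fgl_n}(e)=k^2+k+n$ in the final step, this is merely a transcription slip and not a gap.
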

\begin{proof}
  We have $\dim(\operatorname{GL}_n\times S_e)=n^2+((n-k)+k+k+k^2)=n^2+n+k^2+k$,
  see~\cite[IV, Corollary 1.8]{ss}
  for dimensions of nilpotent orbits.
  So $\dim\fGL_n\times S_e-\dim\fGL_n\times\fGL_k=(n^2+k^2+n+k)-(n^2+k^2)=n+k=\rk(\fgl_n\oplus\fgl_k)$.
  Hence by~Lemma~\ref{lemhook} and~Corollary~\ref{cor1}, $\fGL_n\times S_e$ is a
hypershperical variety of the group $\fGL_n\times\fGL_k$.
\end{proof}
\subsubsection{Hook nilpotents in $\fsp_{2n}$}
\label{sec:sp2n}
Let $G=\fSp_{2n}$ and let $e\in\fsp_{2n}$ be a nilpotent element of Jordan type
$(2(n-k),1^{2k}),\ k\neq 0$.
\begin{prop} $\fSp_{2n}\times S_e$ is a coisotropic variety of the group $\fSp_{2n}\times\fSp_{2k}$.
\end{prop}
\begin{proof}
By~\cite[IV, \S\S2.22-2.28]{ss}, $\dim S_e=2k^2+2k+n$ and hence $\dim \fSp_{2n}\times S_e - \dim \fSp_{2n}\times \fSp_{2k}=\dim S_e - \dim \fSp_{2k}=(2k^2+2k+n)-(2k^2+k)=n+k=\rk(\fsp_{2n}\oplus\fsp_{2k})$.
So by~Lemma~\ref{lemhook} and~Corollary~\ref{cor1}, $\fSp_{2n}\times S_e$ is a coisotropic variety of the group
$\fSp_{2n}\times\fSp_{2k}$.
\end{proof}

\subsubsection{Hook nilpotents in $\fso_{2n+1}$}
\label{sec:so2n+1}
Let $G=\fSO_{2n+1}$ and let $e\in\fso_{2n+1}$ be a nilpotent element of Jordan type $(2(n-k)+1,1^{2k}),\ k\neq 0$.
\begin{prop} $\fSO_{2n+1}\times S_e$ is a coisotropic variety of the group $\fSO_{2n+1}\times\fSO_{2k}$.
\end{prop}
\begin{proof}By~\cite[IV, \S\S2.22-2.28]{ss}, $\dim S_e=2k^2+n$ and hence $\dim \fSO_{2n+1}\times S_e - \dim \fSO_{2n+1}\times \fSO_{2k}=\dim S_e - \dim \fSO_{2k}=(2k^2+n)-(2k^2-k)=n+k=\rk(\fso_{2n+1}\oplus\fso_{2k})$.
So by~Lemma~\ref{lemhook} and~Corollary~\ref{cor1}, $\fSO_{2n+1}\times S_e$ is a coisotropic variety of the group
$\fSO_{2n+1}\times\fSO_{2k}$.
\end{proof}

\subsubsection{Hook nilpotents in $\fso_{2n}$}
\label{sec:so2n}
Let $G=\fSO_{2n}$ and let $e\in\fso_{2n}$ be a nilpotent element of Jordan type $(2(n-k)-1,1^{2k+1}),\ k\neq 0$.
\begin{prop} $\fSO_{2n}\times S_e$ is a coisotropic variety of the group $\fSO_{2n}\times\fSO_{2k+1}$.
\end{prop}
\begin{proof}By~\cite[IV, \S\S2.22-2.28]{ss}, $\dim S_e=2k^2+2k+n$ and hence $\dim \fSO_{2n}\times S_e - \dim \fSO_{2n}\times \fSO_{2k+1}=\dim S_e - \dim \fSO_{2k+1}=(2k^2+2k+n)-(2k^2+k)=n+k=\rk(\fso_{2n}\oplus\fso_{2k+1})$.
So by~Lemma~\ref{lemhook} and~Corollary~\ref{cor1}, $\fSO_{2n}\times S_e$ is a coisotropic variety of the group
$\fSO_{2n}\times\fSO_{2k+1}$.
\end{proof}

\subsection{Exceptional case in $\fsp_6$}
\label{sec:33sp}
Let $G=\fSp_6=\fSp(V)$ and let $e$ be a nilpotent of Jordan type $(3,3)$. Choose a basis in $V$
such that the Gram matrix of the skew-symmetric bilinear form on $V$ has the following form:
\[M=\begin{pmatrix}
0& I_3\\
-I_3& 0\\
\end{pmatrix},I_3=
\begin{pmatrix}
1& 0&0\\
0&1& 0\\
0&0&1
\end{pmatrix}.\]
In this basis the Lie algebra $\fsp_6$ consists of matrices
$\bigl(\begin{smallmatrix}A & B\\C& -A^{\fT}\end{smallmatrix}\bigr)$ where $A,B,C\in \fMat_ {3\times3}(\C)$ such that $B=B^{\fT}, C=C^{\fT}$.
Consider the following nilpotent element $e\in\fsp_6$ of Jordan type $(3,3)$:
\[e=\begin{pmatrix}
J& 0\\
0& -J^{\fT}\\
\end{pmatrix},\] where $J$ is the Jordan block of size $3$. Note that $e'=\bigl(\begin{smallmatrix}0 & 1 & 0\\0& 0 & 1\\ 0&0&0\end{smallmatrix}\bigr), f'=\bigl(\begin{smallmatrix}0 & 0 & 0\\2& 0& 0\\ 0&2&0\end{smallmatrix}\bigr), h'=\bigl(\begin{smallmatrix}2 & 0 & 0\\0& 0 & 0\\ 0&0&-2\end{smallmatrix}\bigr)$ is an $\fsl_2$-triple in $\fsl_3$. Then $e=\bigl(\begin{smallmatrix}e' & 0\\0& -e'^{\fT}\end{smallmatrix}\bigr)$ and $f=\bigl(\begin{smallmatrix}f' & 0\\0& -f'^{\fT}\end{smallmatrix}\bigr)\in \fsp_6$ form an $\fsl_2$-triple in $\fsp_6$. By an easy computation $\fz_{\fsp_6}(f)$ consists of matrices of the following form:
\[\begin{pmatrix}
& &&0&0&b\\
& p&&0&-b&0\\
& &&b&0&d\\
a& 0&c&&&\\
0& -c&0&&-p^T&\\
c& 0&0&&&
\end{pmatrix},\]
where $p\in \fz_{\fgl_3}(f')$, $a,b,c,d\in\C$.
In particular $\dim \fSp_6\times S_e=\dim\fSp_6+\dim\fz_{\fgl_3}(f')+4=21+3+4=28$.
Note that we have an embedding $\fsl_2\hookrightarrow\fz_{\fsp_6}(f):$
\[\fsl_2\ni\begin{pmatrix}a&b\\
c&-a \end{pmatrix}\mapsto\begin{pmatrix}
a& 0&0&0&0&b\\
0& a&0&0&-b&0\\
0& 0&a&b&0&0\\
0& 0&c&-a&0&0\\
0& -c&0&0&-a&0\\
c& 0&0&0&0&-a
\end{pmatrix}\in\fz_{\fsp_6}(f).\]
 From now on, when we write $\fsl_2\subset\fsp_6$ we will mean this embedding. Note that
 $\fsl_2\subset\fz_{\fsp_6}(e)\cap\fz_{\fsp_6}(f)$. Consider $\fSL_2\subset Z_{\fSp_6}(e)\cap Z_{\fSp_6}(f)$
 corresponding to the Lie algebra $\fsl_2\subset \fsp_6$. Then $\fSL_2$ centralizes $e$,$f$,
 and so it acts on the Slodowy slice $S_e$. In this case $Q=\fSL_2$ and we have the symplectic
 action $\fSp_6\times\fSL_2\curvearrowright \fSp_6\times S_e$ as before.
\begin{prop} 
\label{33sp6}
$\fSp_6\times S_e$ is a coisotropic variety of the group $\fSp_6\times\fSL_2$.
\end{prop}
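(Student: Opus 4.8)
The plan is to apply Corollary~\ref{cor1}, which reduces the hyperspherical property to two ingredients: finiteness of the generic stabilizer of $\fSL_2$ acting on $S_e$, and the dimension equality $\dim(\fSp_6\times S_e)=\dim(\fSp_6\times\fSL_2)+\rk(\fsp_6\oplus\fsl_2)$. Since the right-hand side is $21+3+(3+1)=28$, and the explicit description of $\fz_{\fsp_6}(f)$ already computed in the excerpt gives $\dim(\fSp_6\times S_e)=21+3+4=28$, the dimension equality holds on the nose. So the entire content of the proof is the verification of the finite-stabilizer hypothesis of Corollary~\ref{cor1}.

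First I would record that, because the $\fSL_2$-action on $S_e$ is the restriction of the adjoint action of $Q=\fSL_2$ on the slice $e+\fz_{\fsp_6}(f)$, the stabilizer question is governed by how $\fsl_2$ sits inside $\fz_{\fsp_6}(f)$ as an $\fsl_2$-module. Following the pattern of Lemma~\ref{lemhook}, the key step is to exhibit inside the $\fsl_2$-module $\fz_{\fsp_6}(f)$ a copy of the adjoint representation $\fsl_2$ itself (which is present by construction, since $\fsl_2\subset\fz_{\fsp_6}(f)$ via the displayed embedding) together with a copy of the $2$-dimensional tautological representation $\BC^2$. Concretely, one reads off from the displayed form of $\fz_{\fsp_6}(f)$ that the coordinates $a,c,d$ (or an appropriate combination of the entries governed by them) span, under the bracket with the embedded $\fsl_2$, a nontrivial irreducible summand on which a generic element has finite, indeed trivial, stabilizer. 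Decomposing $\fz_{\fsp_6}(f)\cong\fsl_2\oplus V_{\mathrm{taut}}\oplus(\text{trivials})$ as an $\fSL_2$-module, a generic point $x=e+\xi$ with $\xi$ having a nonzero component in both the adjoint and the tautological summand is fixed only by $\pm 1\in\fSL_2$.

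The main obstacle, and the only genuinely computational part, is verifying this $\fSL_2$-module decomposition of $\fz_{\fsp_6}(f)$: one must check that the embedded $\fsl_2$ acts on the seven-dimensional space $\fz_{\fsp_6}(f)$ with the tautological summand actually appearing. I would do this by bracketing the basis element $\bigl(\begin{smallmatrix}1&0\\0&-1\end{smallmatrix}\bigr)$ of the embedded $\fsl_2$ against the parametrizing entries $a,b,c,d,p$ of the displayed matrix, reading off the weights, and identifying a highest-weight vector of weight $1$ to pin down a $\BC^2$. Once the summand $\fsl_2\oplus\BC^2$ is confirmed inside $\fz_{\fsp_6}(f)$, the argument that the generic $\fSL_2$-stabilizer on $S_e$ is finite is identical to the one in Lemma~\ref{lemhook}: an element of $\fSL_2$ fixing a generic point of the adjoint summand already lies in the center, and requiring it to fix a generic vector of $\BC^2$ forces it to be trivial.

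With finiteness of the generic stabilizer established, Corollary~\ref{cor1} applies verbatim, and the dimension count $28=28$ recorded above completes the proof that $\fSp_6\times S_e$ is a hyperspherical variety of $\fSp_6\times\fSL_2$.
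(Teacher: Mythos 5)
Your overall strategy is exactly the paper's: apply Corollary~\ref{cor1}, note the dimension identity $28=21+3+(3+1)$, and reduce everything to showing that the generic $\fSL_2$-stabilizer on $S_e$ is finite by decomposing $\fz_{\fsp_6}(f)$ as an $\fsl_2$-module. But your proposed decomposition is wrong, and the verification you outline would fail. The Cartan element $\operatorname{diag}(a,a,a,-a,-a,-a)$ of the embedded $\fsl_2$ acts on $\fz_{\fsp_6}(f)$ with weight $0$ on the three-dimensional block parametrized by $p\in\fz_{\fgl_3}(f')$, weight $+2$ on the two-dimensional piece parametrized by $b,d$, and weight $-2$ on the two-dimensional piece parametrized by $a,c$. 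All weights are even, with multiplicities $2,3,2$ for $-2,0,2$; consequently the tautological representation $\BC^2$ (weights $\pm1$) cannot occur, and your plan to ``identify a highest-weight vector of weight $1$ to pin down a $\BC^2$'' would come up empty. The correct decomposition, which is the one the paper computes, is $\fz_{\fsp_6}(f)\simeq\fsl_2\oplus\fsl_2\oplus\C$: two copies of the adjoint representation and one trivial summand.

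The conclusion you want is still true, but the finiteness argument has to be run on $\fsl_2\oplus\fsl_2$ rather than on $\fsl_2\oplus\BC^2$: the stabilizer in $\fSL_2$ of a generic (regular semisimple) element of one adjoint copy is a maximal torus, and intersecting with the stabilizer of a generic element of the second copy cuts this down to the center $\{\pm1\}$, which is finite. (This is also why the paper phrases the finiteness as coming from the submodule $\fsl_2\oplus\fsl_2$.) With that correction your appeal to Corollary~\ref{cor1} and the dimension count $28=28$ go through verbatim. So the gap is not in the architecture of the proof but in the module-theoretic claim at its core; as written, the step you identify as ``the only genuinely computational part'' is precisely the one that does not check out.
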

\begin{proof}  Consider a Cartan subalgebra $\fh\subset\fsl_2\subset\fsp_6$
  consisting of matrices of the following form:
\[\begin{pmatrix}
a& 0&0&0&0&0\\
0& a&0&0&0&0\\
0& 0&a&0&0&0\\
0& 0&0&-a&0&0\\
0& 0&0&0&-a&0\\
0& 0&0&0&0&-a
\end{pmatrix},\ a\in \C.\]
Let $W_k$ denote the subspace of the $\fsl_2$-representation $\fz_{\fsp_6}(f)$ consisting of all
vectors of weight $k$. It is easy to see that $\fz_{\fsp_6}(f)=W_{-2}\oplus W_{0}\oplus W_{2}$,
where $W_{-2},W_{0},W_{2}$ consist of the following matrices:
\[\begin{pmatrix}
& &&0&0&b\\
& 0&&0&-b&0\\
& &&b&0&d\\
0& 0&0&&&\\
0& 0&0&&0&\\
0& 0&0&&&
\end{pmatrix}\in W_2,\
\begin{pmatrix}
& &&0&0&0\\
& 0&&0&0&0\\
& &&0&0&0\\
a& 0&c&&&\\
0& -c&0&&0&\\
c& 0&0&&&
\end{pmatrix}\in W_{-2},\
\begin{pmatrix}
& &&0&0&0\\
& p&&0&0&0\\
& &&0&0&0\\
0& 0&0&&&\\
0& 0&0&&-p^T&\\
0& 0&0&&&
\end{pmatrix}\in W_0,\]
where $p\in \fz_{\fgl_3}(f'),\ a,b,c,d\in\C$. In particular, $\dim W_2=\dim W_{-2}=2$ and $\dim W_0=3$. Hence $\fz_{\fsp_6}(f)\simeq \fsl_2\oplus\fsl_2\oplus \C$ as $\fsl_2$-representations, where $\fsl_2$ is the adjoint representation and $\C$ is the trivial one. So the stabilizer of a general point of $S_e$ in $\fSL_2$ is finite since the representation $\fz_{\fsp_6}(f)$ contains $\fsl_2\oplus\fsl_2$.
Note that $\dim \fSp_6\times S_e-\dim \fSp_6\times \fSL_2=28-24=4=3+1=\rk(\fsp_6\oplus\fsl_2)$.
So by~Corollary~\ref{cor1}, $\fSp_6\times S_e$ is a coisotropic variety of the group $\fSp_6\times\fSL_2$.
\end{proof}

\subsection{Exceptional case in ${\mathfrak{g}}_2$}
\label{sec:g2}
Let $G=\mathrm{G}_2$ and $e\in\fg_2$ be a weight vector corresponding to a short root of $\fg_2$.
We will follow the notation of~\cite[Figure at p.340]{FH} for the roots of $\fg_2$.
The positive roots will be denoted $\alpha_i,\ i=1,\ldots,6$. The negative roots
will be denoted $\beta_j=-\alpha_j$. Finally, $\alpha_1$ is the short simple root, and $\alpha_2$ is
the long simple root. As always, for any root $\gamma,\ \fg_{\gamma}$ stands for the corresponding root
subspace. 

Let $e\in \fg_{\alpha_1},\ f\in \fg_{\beta_1}$. Then $\fq=\fz_{\fg_2}(e,f,h)=\fg_{\alpha_6}\oplus\fg_{\beta_6}\oplus[\fg_{\alpha_6},\fg_{\beta_6}]\simeq \fsl_2$. In particular $Z_{\mathrm{G}_2}(e,f,h)\simeq\fSL_2$.
We have a symplectic action $\mathrm{G}_2\times \fSL_2\curvearrowright \mathrm{G}_2\times S_e$.
\begin{prop} $\mathrm{G}_2\times S_e$ is a coisotropic variety of the group
$\mathrm{G}_2\times \fSL_2$.
\end{prop}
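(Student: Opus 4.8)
The plan is to follow the exact same template established by Corollary~\ref{cor1} and the preceding hook cases: I must verify the two hypotheses of the Corollary, namely that a general point of $S_e$ has finite stabilizer in $Q=\fSL_2$, and that the dimension count $\dim\mathrm{G}_2\times S_e=\dim\mathrm{G}_2\times\fSL_2+\rk(\fg_2\oplus\fsl_2)$ holds. Since $\dim\mathrm{G}_2=14$ and $\dim\fSL_2=3$ with $\rk(\fg_2\oplus\fsl_2)=2+1=3$, the target identity reads $14+\dim S_e=14+3+3$, i.e.\ I need $\dim S_e=6$. As $\dim S_e=\dim\fz_{\fg_2}(f)=\dim\fz_{\fg_2}(e)$ equals $\dim\fg_2$ minus the dimension of the nilpotent orbit ${\mathbb O}_e$, and the short-root nilpotent orbit in $\fg_2$ is the $8$-dimensional orbit, I get $\dim S_e=14-8=6$ immediately. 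So the dimension condition is essentially free once I quote the dimension of the subregular (8-dimensional) orbit.

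The real content, exactly as in Lemma~\ref{lemhook}, is the finiteness of the generic $\fSL_2$-stabilizer, and the cleanest route is to decompose $\fz_{\fg_2}(f)$ as a $\fq\simeq\fsl_2$-module. First I would record that $\fq=\fg_{\alpha_6}\oplus\fg_{\beta_6}\oplus[\fg_{\alpha_6},\fg_{\beta_6}]$ is an $\fsl_2$ sitting inside $\fz_{\fg_2}(e,f,h)$, with the long root $\alpha_6$ orthogonal to $\alpha_1$ so that the bracket relations are compatible. Then I would compute the $\fsl_2$-module structure of the $6$-dimensional space $\fz_{\fg_2}(f)$ under $\fq$. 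The expected answer is $\fz_{\fg_2}(f)\simeq\fq\oplus M$ where $\fq$ is the adjoint ($3$-dimensional) summand and $M$ is another $\fq$-submodule of dimension $3$; since $\fz_{\fg_2}(f)$ visibly contains the adjoint copy $\fq$ together with a nontrivial complementary piece on which $\fSL_2$ acts with generically finite stabilizer, the generic stabilizer on all of $S_e=e+\fz_{\fg_2}(f)$ is finite. Concretely it suffices to exhibit inside $\fz_{\fg_2}(f)$ a $\fq$-submodule isomorphic to the $2$-dimensional tautological representation (or to the adjoint representation $\fsl_2$ itself, as in the $\fsp_6$ case), because $\fSL_2$ acts on its adjoint or tautological representation with finite generic stabilizer, and this forces the generic stabilizer on the larger space to be finite as well.

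The main obstacle will be the explicit identification of the $\fq$-module structure of $\fz_{\fg_2}(f)$. Unlike the classical cases, where matrix blocks make the decomposition transparent, here I must work with the root-space description of $\fg_2$ and carefully track how the $\fsl_2$-triple $(e_{\alpha_6},h_{\alpha_6},f_{\beta_6})$ spanning $\fq$ acts on the weight vectors of $\fg_2$ that centralize $f\in\fg_{\beta_1}$. The practical approach is to use the root diagram of $\fg_2$ from~\cite{FH}: compute $\fz_{\fg_2}(f)$ as the span of those root vectors $e_\gamma$ (and Cartan elements) killed by $\mathrm{ad}(f)$, then group them into $\mathrm{ad}(\fq)$-strings. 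I expect $\fz_{\fg_2}(f)$ to split as the adjoint representation $\fsl_2$ plus a second copy of a $3$-dimensional irreducible (the adjoint again, or a spin-$1$ module), matching the pattern $\fz_{\fsp_6}(f)\simeq\fsl_2\oplus\fsl_2\oplus\C$ of Proposition~\ref{33sp6}; in any event the presence of at least one faithful nontrivial $\fq$-summand yields the finite generic stabilizer. Once that submodule is exhibited, Lemma-style reasoning gives finiteness, and Corollary~\ref{cor1} together with the dimension count $\dim S_e=6$ completes the proof.
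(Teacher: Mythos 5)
Your overall strategy --- compute $\dim S_e=14-8=6$ from the dimension of the $8$-dimensional orbit, decompose $\fz_{\fg_2}(f)$ as a $\fq$-module to get finiteness of the generic stabilizer, and feed both facts into Corollary~\ref{cor1} --- is exactly the paper's, and your dimension count is correct. However, the criterion you invoke for finiteness of the generic stabilizer is false: $\fSL_2$ does \emph{not} act with finite generic stabilizer on its tautological representation $V$ (the stabilizer of a nonzero vector is a one-parameter unipotent subgroup), nor on its adjoint representation (the stabilizer of a regular semisimple element is a maximal torus); in both cases the generic orbit has dimension $2<3=\dim\fSL_2$, so the generic stabilizer is positive-dimensional. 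Hence exhibiting a single nontrivial $\fq$-summand inside $\fz_{\fg_2}(f)$ cannot suffice. Indeed, the adjoint copy $\fq=\fz_{\fg_2}(e,f,h)\subset\fz_{\fg_2}(f)$ is present for \emph{every} equivariant slice, so your criterion would ``prove'' finiteness unconditionally. Note that in the parallel $\fsp_6$ case (Proposition~\ref{33sp6}) the paper uses the sum of \emph{two} adjoint copies, not one, and your fallback ``a nontrivial complementary piece on which $\fSL_2$ acts with generically finite stabilizer'' is vacuous here, since no $2$- or $3$-dimensional $\fSL_2$-module has that property by itself.

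The actual decomposition, which the paper records, is $\fz_{\fg_2}(f)=\fg_{\beta_1}\oplus(\fg_{\alpha_2}\oplus\fg_{\beta_5})\oplus\fq\simeq\C\oplus V\oplus\fsl_2$, so your guessed complement $M$ is the \emph{reducible} module $\C\oplus V$ rather than a second $3$-dimensional irreducible. Finiteness (in fact triviality) of the generic stabilizer then requires using both nontrivial summands simultaneously: the stabilizer in $\fSL_2$ of a generic vector of $V$ is a one-dimensional unipotent subgroup $U$, and no nontrivial element of $U$ fixes a generic element of the adjoint summand, so the joint stabilizer is trivial. With that two-step argument replacing your one-summand criterion, the rest of your proof goes through and coincides with the paper's.
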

\begin{proof} Note that $\fz_{\fg_2}(f)=\fg_{\beta_1}\oplus(\fg_{\alpha_2}\oplus\fg_{\beta_5})\oplus(\fg_{\alpha_6}\oplus\fg_{\beta_6}\oplus[\fg_{\alpha_6},\fg_{\beta_6}])\simeq \C\oplus V\oplus \fsl_2$ as an
$\fsl_2=\left(\fg_{\alpha_6}\oplus\fg_{\beta_6}\oplus[\fg_{\alpha_6},\fg_{\beta_6}]\right)$-module,
where $\C$ is the trivial representation, $V$ is the tautological $2$-dimensional
$\fsl_2$-representation, and $\fsl_2$ is the adjoint representation. In particular, the stabilizer
of a general point of $S_e$ in $\fSL_2$ is trivial since $\fz_{\fg_2}(f)$ contains the
$\fSL_2$-submodule $V\oplus \fsl_2$ and $\dim S_e=\dim\fz_{\fg_2}(f)=6$. Note that $\dim \mathrm{G}_2\times S_e-\dim \mathrm{G}_2\times \fSL_2=(14+6)-(14+3)=3=2+1=\rk(\fg_2\oplus\fsl_2)$.
So by~Corollary~\ref{cor1}, $\mathrm{G}_2\times S_e$ is a coisotropic variety of the group
$\mathrm{G}_2\times \fSL_2$.
\end{proof}

\section{Non-coisotropic slice varieties}
\label{sec:nonhyp}

\subsection{Other nilpotents in $\fgl_n$}
\label{sec:othersgl}
Let $G=\fGL_n$ and $e\in\fgl_{n}$ be a nilpotent element of the Jordan type
$\lambda=(\lambda_1,\lambda_2,...,\lambda_k)$,
\[\lambda_1\geqslant\lambda_2\geqslant...\geqslant\lambda_k,\ n=\sum_{i=1}^k \lambda_i,\]
Let $\mu=(\mu_1,\mu_2,...,\mu_s)$ be the dual partition defined by
$\mu_i=\#\{j: \lambda_j\geqslant i\}$. Then
\[Q=\prod_{i=1}^s \fGL_{\mu_i-\mu_{i+1}},\ \operatorname{and}\ \dim S_e = n+2\sum_{i=1}^s  \binom{\mu_i}2,\]
see~\cite[IV, Corollary 1.8]{ss}.

\begin{prop}
  \label{gln}
  $\fGL_n\times S_e$ is not a coisotropic variety of the group $\fGL_n\times Q$
  unless $e$ is a nilpotent of hook type or of type $(2,2)$.
\end{prop}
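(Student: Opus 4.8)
The plan is to reduce the non-hyperspherical conclusion to a dimension count via the basic necessary condition of Proposition~\ref{prop1}(2), namely that hypersphericity of $\fGL_n\times S_e$ forces
\[
\dim(\fGL_n\times S_e)\leqslant\dim(\fGL_n\times Q)+\rk(\fgl_n\oplus\fq).
\]
Using the formulas quoted in~\S\ref{sec:othersgl}, we have $\dim(\fGL_n\times S_e)=n^2+n+2\sum_i\binom{\mu_i}2$, while $\dim(\fGL_n\times Q)=n^2+\sum_i(\mu_i-\mu_{i+1})^2$ and $\rk(\fgl_n\oplus\fq)=n+\sum_i(\mu_i-\mu_{i+1})$. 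Thus the necessary inequality becomes, after the $n^2$ and $n$ terms cancel,
\[
2\sum_{i=1}^s\binom{\mu_i}2\leqslant\sum_{i=1}^s(\mu_i-\mu_{i+1})^2+\sum_{i=1}^s(\mu_i-\mu_{i+1}),
\]
with the convention $\mu_{s+1}=0$. The entire proof then amounts to showing that this inequality \emph{fails} for every partition $\mu$ except those dual to a hook or to $(2,2)$.

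First I would rewrite the left-hand side as $\sum_i(\mu_i^2-\mu_i)$ and set $d_i:=\mu_i-\mu_{i+1}\geqslant0$, so that $\mu_i=\sum_{j\geqslant i}d_j$. Substituting and expanding, the desired failure inequality $2\sum\binom{\mu_i}2>\sum d_i^2+\sum d_i$ becomes a purely combinatorial statement about the nonnegative integers $d_1,\dots,d_s$. The cross terms $\mu_i^2=\bigl(\sum_{j\geqslant i}d_j\bigr)^2$ produce all the products $d_jd_\ell$ with multiplicity equal to the number of indices $i$ with $i\leqslant\min(j,\ell)$, i.e.\ with multiplicity $\min(j,\ell)$; so $\sum_i\mu_i^2=\sum_{j,\ell}\min(j,\ell)\,d_jd_\ell$, and similarly $\sum_i\mu_i=\sum_j j\,d_j$. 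The inequality to be violated then reads
\[
2\sum_{j,\ell}\min(j,\ell)\,d_jd_\ell-2\sum_j j\,d_j>\sum_j d_j^2+\sum_j d_j,
\]
and I would compare the two sides term by term, isolating the diagonal contribution $\sum_j(2j-1)d_j^2$ on the left against $\sum_j d_j^2$ on the right and treating the off-diagonal $4\sum_{j<\ell}j\,d_jd_\ell$ as a surplus.

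The strategy is then to argue that once the partition is large enough (more than two parts of the dual exceeding $1$, or a single part of size $\geqslant 3$) the left side strictly dominates, so the necessary inequality fails and $\fGL_n\times S_e$ cannot be hyperspherical. Concretely, I expect the clean way to organize this is to observe that hook type corresponds to $\mu$ having at most one part greater than $1$ (equivalently $d_j=0$ for all but controlled indices), while $(2,2)$ is the isolated sporadic solution, and to check these few boundary cases by hand against the sharp cases already proved hyperspherical in~\S\ref{sec:hookgln}. The main obstacle is the bookkeeping: making the general monotonicity argument airtight, i.e.\ showing that adding any box to $\mu$ outside the hook/$(2,2)$ regime strictly increases the left-hand deficit $2\sum\binom{\mu_i}2-\sum d_i^2-\sum d_i$ past zero. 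I would handle this by an induction or discrete-convexity argument on the parts $\mu_i$, verifying that the quadratic growth of $\sum\binom{\mu_i}2$ outpaces the at-most-quadratic $\sum d_i^2$ except in the degenerate low-dimensional configurations, and then separately confirming that hook and $(2,2)$ saturate or satisfy the bound.
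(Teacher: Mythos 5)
Your plan rests on the dimension bound of Proposition~\ref{prop1}(2) applied to the full group $\fGL_n\times Q$, and the combinatorial claim you reduce to is false as stated. The missing idea is that the diagonal central subgroup $\BC^\times\subset\fGL_n\times Q$ acts trivially on $\fGL_n\times S_e$, so one must apply the bound to the quotient $(\fGL_n\times Q)/\BC^\times$; this lowers both the dimension and the rank by one and sharpens the necessary inequality by $2$. Without that correction, the inequality you propose to violate,
\[
\sum_{i=1}^s\mu_i^2-\sum_{i=1}^s\mu_i\;>\;\sum_{i=1}^s(\mu_i-\mu_{i+1})^2+\mu_1,
\]
becomes an \emph{equality} for $\mu=(3,3)$ (Jordan type $(2,2,2)$ in $\fgl_6$) and for $\mu=(3,2)$ (Jordan type $(2,2,1)$ in $\fgl_5$). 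Concretely, for $(2,2,2)$ one has $\dim(\fGL_6\times S_e)=36+18=54=\dim(\fGL_6\times\fGL_3)+\rk(\fgl_6\oplus\fgl_3)$, so Proposition~\ref{prop1}(2) alone cannot rule out hypersphericity there, yet these types are neither hooks nor $(2,2)$ and must be excluded. Your argument therefore has a genuine gap precisely at the sharpest cases.

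With the central correction the right-hand side acquires a $-2$, and the resulting inequality does hold for every $\mu$ outside the hook and $(2,2)$ cases; the paper proves this by induction on the number of parts of $\mu$, with base case $\mu=(\mu_1,\mu_2)$ (where $\mu_2=1$ gives hooks and $\mu_1=\mu_2=2$ gives $(2,2)$) and a nonnegative increment $\mu_{s+1}(2\mu_s-\mu_{s+1}-1)$ at each step. Your substitution $d_i=\mu_i-\mu_{i+1}$ and the identity $\sum_i\mu_i^2=\sum_{j,\ell}\min(j,\ell)\,d_jd_\ell$ could be adapted to that corrected inequality, but the ``surplus'' you isolate is not strictly positive in the two cases above, so the monotonicity argument cannot close as written. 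You would also need to make the induction/convexity step explicit rather than expected.
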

\begin{proof}
  First of all, note that the subgroup
  \[\BC^\times=\{(t1_{\fGL_n},t1_{\fGL_{\mu_1-\mu_{2}}},t1_{\fGL_{\mu_2-\mu_{3}}},...,t1_{\fGL_{\mu_s}}):
  t\in \BC^\times\}\subset \fGL_n\times Q\] acts trivially on $\fGL_n\times S_e$, so it suffices to
  check that the action $(\fGL_n\times Q)/\BC^\times\curvearrowright\fGL_n\times S_e$ is not
  coisotropic.
\par
Then by ~Proposition~\ref{prop1}(2), it is enough to check that 
\begin{equation}
\dim \fGL_n\times S_e > 2\dim B_{\fGL_n\times Q} -2,\label{i_1}
\end{equation}
where $B_{\fGL_n\times Q}$ is a Borel subgroup of $\fGL_n\times Q$. Note that 
\[\dim B_{\fGL_n\times Q}= \binom{n+1}2+\sum_{i=1}^s  \binom{\mu_i-\mu_{i+1}+1}2,\]
so \eqref{i_1} takes the following form:
\begin{multline}
 n^2+(n+2\sum_{i=1}^s \binom{\mu_i}2)> 2 \binom{n+1}2+2\sum_{i=1}^s  \binom{\mu_i-\mu_{i+1}+1}2 -2\\
 \Leftrightarrow \sum_{i=1}^s\mu_i^2-\sum_{i=1}^s\mu_i> \sum_{i=1}^s(\mu_i-\mu_{i+1})^2+\sum_{i=1}^s(\mu_i-\mu_{i+1})-2\\
 \Leftrightarrow \sum_{i=1}^s\mu_i^2-\sum_{i=1}^s\mu_i > \sum_{i=1}^s(\mu_i-\mu_{i+1})^2+\mu_1-2.\label{i_2}
 \end{multline}
We will prove by induction on the length of the partition $\mu$ that~\eqref{i_2} is true for every partition $\mu$ except for hook partitions and $(2,2)$.
\par
Let as check the base of induction.
Let $\mu=(\mu_1,\mu_2)$. Then~\eqref{i_2} takes the following form:
\begin{equation}
2\mu_1\mu_2+2>\mu_2^2+2\mu_1+\mu_2.\label{i_3}
\end{equation}
If $\mu_2=1$, then~\eqref{i_3} is not true. Namely, it takes the form $2+2\mu_1=2+2\mu_1$. This case
corresponds to a hook nilpotent.

If $\mu_1\geqslant\mu_2> 1$, then
\begin{equation}
2\mu_1\mu_2+2>\mu_2^2+2\mu_1+\mu_2\ 
\Leftrightarrow\ 2\mu_1(\mu_2-1)>(\mu_2-1)(\mu_2+2).\label{i_4}
\end{equation}
Now~\eqref{i_4} is true for every $(\mu_1,\mu_2)$ except for $\mu_1=\mu_2=2$.
This exceptional case corresponds to a nilpotent of type $(2,2)$ in $\fgl(4)$.

Let us check the step of induction.
Let $\mu=(\mu_1,\mu_2,...,\mu_s,\mu_{s+1})$. Then by induction, it suffices to verify that
\[\mu_{s+1}^2-\mu_{s+1}\geq(\mu_s-\mu_{s+1})^2+\mu_{s+1}^2-\mu_s^2\
\Leftrightarrow\ \mu_{s+1}(\mu_{s+1}+1-2\mu_{s})\leqslant 0.\]
This inequality is true for every $(\mu_s,\mu_{s+1})$ since $\mu_s\geq\mu_{s+1}$, and it is an equality
if and only if $\mu_s=\mu_{s+1}=1$. This completes the proof.
\end{proof}

\begin{remark}
  {\em Under the classical isomorphism ${\mathfrak{sl}}_4\cong\fso_6$, a nilpotent element of type
  $(2,2)$ goes to a nilpotent element of type $(3,1^3)$. So the ``exceptional'' case
  in~Proposition~\ref{gln} is of hook type in $\fso_6$.}
\end{remark}

\subsection{Other nilpotents in $\fsp_{2n}$}
\label{sec:otherssp}
Let $G=\fSp_{2n}$ and let $e\in\fsp_{2n}$ be a nilpotent element of Jordan type
$\lambda=(\lambda_1,\lambda_2,...,\lambda_k)$, and let $\mu=(\mu_1,\mu_2,...,\mu_s)$ be the dual
partition. Then $Q=\prod_{i=1}^s G_i$,
where $G_i$ is $\fSp_{\mu_i-\mu_{i+1}}$ if $i$ is odd and $\fSO_{\mu_i-\mu_{i+1}}$ otherwise.
By~\cite[IV, \S\S2.22-2.28]{ss}, we have
\begin{equation}
  \dim S_e = \frac{1}{2}(\sum_{i=1}^s \mu_i^2+\{j: 2\nmid\lambda_j \})=\frac{1}{2}(\sum_{i=1}^s \mu_i^2+\sum_{i=1}^s(-1)^{i+1} \mu_i).\label{in1}
  \end{equation}
Also, $2\dim \fSp_{2k}+2\rk(\fSp_{2k})=(2k)^2+4k=
2\dim \fSO_{2k+1}+2\rk(\fSO_{2k+1})$, and $2\dim \fSO_{2k}+2\rk(\fSO_{2k})=(2k)^2$.

\begin{prop}
  \label{spn}
  $\fSp_{2n}\times S_e$ is not a coisotropic variety of the group $\fSp_{2n}\times Q$
  unless $e$ is a nilpotent of hook type or of types $(2,2)$ and $(3,3)$. 
\end{prop}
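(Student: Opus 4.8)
The plan is to mimic the strategy of Proposition~\ref{gln} exactly, translating the hyperspherical obstruction into an inequality on the dual partition $\mu$ and then classifying the (few) partitions that fail it. By~Proposition~\ref{prop1}(2), $\fSp_{2n}\times S_e$ can be hyperspherical only if $\dim\fSp_{2n}\times S_e\le 2\dim B_{\fSp_{2n}\times Q}$, where $B_{\fSp_{2n}\times Q}$ is a Borel subgroup; equivalently $\dim\fSp_{2n}\times S_e\le\dim(\fSp_{2n}\times Q)+\rk(\fSp_{2n}\times Q)$. First I would substitute $\dim S_e$ from~\eqref{in1} together with $\dim\fSp_{2n}=2n^2+n$, and $\dim Q+\rk Q=\sum_i\big(\dim G_i+\rk G_i\big)$ computed from the identities just displayed before the statement: $2\dim\fSp_{2k}+2\rk\fSp_{2k}=(2k)^2+4k=2\dim\fSO_{2k+1}+2\rk\fSO_{2k+1}$ and $2\dim\fSO_{2k}+2\rk\fSO_{2k}=(2k)^2$. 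Since $\sum_i\mu_i=2n$, the $\dim\fSp_{2n}$ and the $\sum\mu_i^2/2$ terms should cancel against the corresponding pieces of $\dim B_{\fSp_{2n}}$ and $\sum_i(\mu_i-\mu_{i+1})^2$ coming from the factors $G_i$, reducing the whole inequality to a clean statement in the $\mu_i-\mu_{i+1}$ and the parity-dependent linear terms.

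The key point is to keep careful track of the parity bookkeeping. In the symplectic case $G_i$ is $\fSp_{\mu_i-\mu_{i+1}}$ for odd $i$ and $\fSO_{\mu_i-\mu_{i+1}}$ for even $i$, so the contribution of each $G_i$ to $2\dim B_{G_i}=2\dim G_i+2\rk G_i$ is $(\mu_i-\mu_{i+1})^2+2(\mu_i-\mu_{i+1})$ in the symplectic/odd-orthogonal case and $(\mu_i-\mu_{i+1})^2$ in the even-orthogonal case, according to the displayed identities. The term $\tfrac12\{j:2\nmid\lambda_j\}=\tfrac12\sum_i(-1)^{i+1}\mu_i$ in~\eqref{in1} is precisely the source of the alternating linear correction, so after clearing denominators I expect the non-hyperspherical inequality to take the form
\begin{equation}
\sum_{i=1}^s\mu_i^2-\sum_{i=1}^s(-1)^{i+1}\mu_i>\sum_{i=1}^s(\mu_i-\mu_{i+1})^2+2\sum_{i\ \mathrm{odd}}(\mu_i-\mu_{i+1})\label{plan:spineq}
\end{equation}
up to a constant fudge of $-2$ coming from the quotient by the central $\BC^\times$, exactly as the $-2$ appears in~\eqref{i_2}. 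As in Proposition~\ref{gln}, I would first quotient by the subgroup acting trivially (the scalars $t\cdot1$ in the appropriate factors) to reduce to an effective action, which shifts the Borel dimension by $1$ and produces the $-2$ on the right.

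I would then prove~\eqref{plan:spineq} by induction on the length $s$ of $\mu$, checking the base cases $s=1,2$ by hand and performing the inductive step by comparing the partition $(\mu_1,\dots,\mu_s,\mu_{s+1})$ with $(\mu_1,\dots,\mu_s)$; the step should again reduce to a single quadratic inequality in $\mu_s,\mu_{s+1}$ that holds with equality exactly on the boundary cases. The main obstacle, and the reason this is genuinely harder than the $\fgl_n$ case, is that the alternating sign $(-1)^{i+1}$ breaks the symmetry under appending a part, so the inductive step depends on the parity of $s$, and the inequality for odd versus even final index must be handled separately; I expect one parity to be comfortably strict while the other is delicate and is exactly where the exceptional partitions appear. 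The low-length base cases must be analyzed directly to extract the genuine exceptions: the hook partitions (whose duals are $(\mu_1,1,\dots)$-type, already treated in~\S\ref{sec:sp2n}), the type $(2,2)$, and the type $(3,3)$ treated in~\S\ref{sec:33sp}. The final task is to confirm that these, together with hook type, are the \emph{only} partitions violating the strict inequality, i.e.\ that the induction leaves no further boundary solutions, which I would verify by pinning down all equality cases of the quadratic in the step and cross-checking them against the small-$s$ base.
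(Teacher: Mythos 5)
Your overall strategy --- reducing to the dimension bound of Proposition~\ref{prop1}(2), rewriting it as an inequality in the dual partition $\mu$, and inducting on the length of $\mu$ with a parity-sensitive step --- is exactly the paper's. But two concrete errors in your setup would derail the classification.

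First, the ``constant fudge of $-2$ coming from the quotient by the central $\BC^\times$'' does not exist here. The centers of $\fSp_{2m}$ and $\fSO_m$ are finite, so $\fSp_{2n}\times Q$ contains no positive-dimensional subgroup acting trivially on $\fSp_{2n}\times S_e$, and the bound stays at $2\dim B_{\fSp_{2n}\times Q}$ with no shift. This is not cosmetic: for hook nilpotents one has $\dim(\fSp_{2n}\times S_e)=2\dim B_{\fSp_{2n}\times Q}$ exactly (\S\ref{sec:sp2n}), so with your $-2$ the criterion $\dim X>2\dim B-2$ would ``prove'' that the hook cases are not hyperspherical, contradicting what you need to keep. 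The $-2$ in~\eqref{i_2} is a genuinely $\fgl_n$-specific phenomenon. Second, your displayed inequality is not the one that comes out of the computation: by~\eqref{in1} the left-hand side should carry $+\sum_i(-1)^{i+1}\mu_i$ (you wrote $-$), and the right-hand side is missing the term $2\rk(\fSp_{2n})=\sum_i\mu_i$. After the cancellations you describe, the correct sufficient condition is
\[\sum_{i=1}^s\mu_i^2-2\sum_{2\nmid i}\mu_i>\sum_{i=1}^s(\mu_i-\mu_{i+1})^2,\]
which is the paper's~\eqref{in2}. Your version misclassifies already the smallest case: for $\mu=(2,2)$ (Jordan type $(2,2)$, one of the allowed exceptions) your inequality reads $8>4$ and would exclude it, whereas the correct one reads $4>4$ and does not. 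Once the inequality is re-derived correctly, the rest of your plan does match the paper: the base case must go up to length $3$ (to catch $(2,2,2)$, i.e.\ Jordan type $(3,3)$), the step appends one or two parts according to the parity of the new index so that the truncation still corresponds to a symplectic nilpotent, and the equality cases of the step ($\mu_s=\mu_{s+1}=2$, resp.\ $\mu_s=\mu_{s+1}=\mu_{s+2}=1$) account exactly for the surviving partitions.
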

\begin{proof}
By ~Proposition~\ref{prop1}(2) and~\eqref{in1}, it is enough to check that 
\begin{multline}
\sum_{i=1}^s \mu_i^2-2\sum_{2\nmid i} \mu_i>\sum_{i=1}^s (\mu_i-\mu_{i+1})^2\label{in2}\\
\Rightarrow\sum_{i=1}^s \mu_i^2+\sum_{i=1}^s(-1)^{i+1}\mu_i>\sum_{i=1}^s (\mu_i-\mu_{i+1})^2+2\sum_{i=1}^s (-1)^{i+1}\mu_i+\sum_{i=1}^s\mu_i\geq2\dim Q+2\rk(\fSp_{2n})+2\rk(Q)\\
\Rightarrow\dim \fSp_{2n}\times S_e > \dim \fSp_{2n}+\dim Q+\rk(\fSp_{2n})+\rk(Q).
\end{multline}
We will check that this inequality is true for every partition $\mu$ corresponding to a nilpotent
element in $\fsp_{2n}$ except for partitions of hook type, $(2,2)$ and $(2,2,2)$, by induction on the
length of the partition $\mu$.

Let us check the base of induction.
The case $\mu=(\mu_1)$ corresponds to the zero nilpotent and~\eqref{in2} is not true.

Let $\mu=(\mu_1,\mu_2,\mu_3)$, where $\mu_3$ may be zero. In this case~\eqref{in2} takes the following
form:
\begin{equation}
2\mu_1\mu_2+2\mu_2\mu_3>\mu_2^2+\mu_3^2+2\mu_1+2\mu_3.\label{in3}
\end{equation}
Note that $\mu_1\mu_2\geq\mu_2^2$ and $\mu_2\mu_3\geq\mu_3^2$ since $\mu_1\geq \mu_2\geq\mu_3$,
and it is enough to check that
\[\mu_1\mu_2+\mu_2\mu_3>2\mu_1+2\mu_3.\]
This is true for $\mu_2>2$. If $\mu_2=1$, then $\mu$ corresponds to hook nilpotent.
Assume that $\mu_2=2$. Then~\eqref{in3} takes the form
\[2\mu_1+2\mu_3>4+\mu_3^2.\]
This is true if $\mu_1>2$. So exceptional partitions are $(2,2,2),\ (2,2)$ and $(2,2,1)$
(but note that there is no nilpotent element in $\fsp_{2n}$ corresponding to the dual partition
$(2,2,1)$).

Let us check the step of induction. There will be two diffrent situations.

First, let $\mu=(\mu_1,\mu_2,...,\mu_s,\mu_{s+1},\mu_{s+2})$ be the dual partition corresponding to
a nilpotent element, where $s+2$ is even. Then the partition $(\mu_1,\mu_2,...,\mu_s)$ corresponds to
a nilpotent element as well. So by induction it suffices to check that 
\begin{multline}
\mu_{s}\mu_{s+1}+\mu_{s+1}\mu_{s+2}\geq 2\mu_{s+1}\\
\Rightarrow 2\mu_{s}\mu_{s+1}+2\mu_{s+1}\mu_{s+2}\geq \mu_{s+1}^2+\mu_{s+2}^2+2\mu_{s+1}\\
\Leftrightarrow\mu_{s+1}^2+\mu_{s+2}^2-2\mu_{s+1}\geq(\mu_s-\mu_{s+1})^2+(\mu_{s+1}-\mu_{s+2})^2+\mu_{s+2}^2-\mu_s^2\label{in4}
\end{multline}
This inequality holds true for every $\mu_{s},\mu_{s+1},\mu_{s+2}$ and it is an equality if and only if
$\mu_{s}=\mu_{s+1}=\mu_{s+2}=1$.

Second, let $\mu=(\mu_1,\mu_2,...,\mu_s,\mu_{s+1})$ be the dual partition corresponding to a nilpotent
element, where $s+1$ is odd. Then the partition $(\mu_1,\mu_2,...,\mu_s)$ corresponds to a nilpotent
element as well. So by induction it suffices to check that 
\begin{equation}
\mu_{s+1}^2-2\mu_{s+1}\geq(\mu_s-\mu_{s+1})^2+\mu_{s+1}^2-\mu_s^2\
\Leftrightarrow\ 2\mu_{s+1}(2\mu_{s}-\mu_{s+1}-2)\geq0.\label{in5}
\end{equation}
This inequality holds true for every $(\mu_s,\mu_{s+1})$ except for $(1,1)$ (note that there is no nilpotent element corresponding to such partition
with $\mu_s=\mu_{s+1}=1$) and it is an equality if and only if $\mu_s=\mu_{s+1}=2$. This completes the proof.
\end{proof}

\begin{remark}
  {\em Under the classical isomorphism ${\mathfrak{sp}}_4\cong\fso_5$, a nilpotent element of type
  $(2,2)$ goes to a nilpotent element of type $(3,1^2)$. So the ``exceptional'' case $(2,2)$
  in~Proposition~\ref{spn} is of hook type in $\fso_5$.}
\end{remark}

\subsection{Other nilpotents in $\fso_n$}
\label{sec:otherorth}
Let $G=\fSO_{n}$ and let $e\in\fso_{n}$ be a nilpotent element of Jordan type
$\lambda=(\lambda_1,\lambda_2,...,\lambda_k)$, and let $\mu=(\mu_1,\mu_2,...,\mu_s)$ be the dual
partition. Then $Q=\prod_{i=1}^s G_i$,
where $G_i$ is $\fSO_{\mu_i-\mu_{i+1}}$ if $i$ is odd and $\fSp_{\mu_i-\mu_{i+1}}$ otherwise.
By~\cite[IV, \S\S2.22-2.28]{ss}, we have
\begin{equation}
  \dim S_e = \frac{1}{2}(\sum_{i=1}^s \mu_i^2-\{j: 2\nmid\lambda_j \})=
  \frac{1}{2}(\sum_{i=1}^s \mu_i^2-\sum_{i=1}^s(-1)^{i+1} \mu_i)\label{ine1}.
\end{equation}

\begin{prop}
  \label{son}
  $\fSO_{n}\times S_e$ is not a coisotropic variety of the group $\fSO_{n}\times Q$
  unless $e$ is a nilpotent of hook type or of types
  $(2^2),\ (3^2),\ (4^2),\ (2^2,1),\ (2^2,1^2)$ and $(2^4)$.
\end{prop}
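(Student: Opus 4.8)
The plan is to mimic the proofs of Propositions~\ref{gln} and~\ref{spn}: non-hypersphericity will follow from a violation of the necessary dimension bound, so I would invoke Proposition~\ref{prop1}(2) and show that for every admissible Jordan type outside the listed exceptions one has the strict inequality
\[\dim \fSO_n\times S_e > \dim \fSO_n\times Q + \rk(\fSO_n\times Q).\]
In contrast to the $\fGL_n$ case of Proposition~\ref{gln}, here there is no trivially acting central torus to quotient out (the groups $\fSO$ and $\fSp$ are semisimple with finite center), so the bound is applied directly, without the $-2$ correction used there.

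First I would record the dimension-plus-rank identities for the factors of $Q$: writing $d_i=\mu_i-\mu_{i+1}$ for the multiplicities, one has $2\dim\fSO_{d}+2\rk\fSO_{d}=d^2$ for $d$ even and $d^2-1$ for $d$ odd, while $2\dim\fSp_{d}+2\rk\fSp_{d}=d^2+2d$; recall that for $G=\fSO_n$ the factor $G_i$ is orthogonal for odd $i$ and symplectic (hence $d_i$ even) for even $i$, and that $2\rk\fSO_n=n$ or $n-1$ according to the parity of $n$. Substituting these, together with the formula~\eqref{ine1} for $\dim S_e$ and the identity $\sum_i(-1)^{i+1}\mu_i=\#\{j:2\nmid\lambda_j\}$, the desired inequality reduces to establishing a purely combinatorial inequality in the dual partition $\mu$, the orthogonal analogue of~\eqref{in2} (with the roles of even and odd indices interchanged relative to the symplectic case and the sign of the linear term reversed).

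Next I would prove this combinatorial inequality by induction on the length $s=\ell(\mu)$, exactly as in Proposition~\ref{spn}. The inductive step splits into two cases according to the parity of the new length, peeling off either the last part or the last two parts so as to stay within partitions that correspond to genuine $\fso$-nilpotents, and reduces each time to a one- or two-variable inequality that holds, with equality precisely in degenerate configurations. The base cases (partitions of small length) must be examined by hand; this is where the six exceptional Jordan types $(2^2),(3^2),(4^2),(2^2,1),(2^2,1^2),(2^4)$ — whose dual partitions are $(2,2),(2,2,2),(2,2,2,2),(3,2),(4,2),(4,4)$ — should emerge as the configurations for which the inequality degenerates to an equality, alongside the hook partitions.

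The main obstacle is the bookkeeping in this case analysis. Compared with the $\fgl$ and $\fsp$ situations there are many more exceptional families, and throughout the induction one must enforce the admissibility constraint that even parts of $\lambda$ occur with even multiplicity (equivalently, $d_i$ is even for even $i$); this constraint both legitimizes the symplectic factors and is responsible for the parity-dependent $\pm1$ corrections coming from the odd-dimensional orthogonal factors and from the ambient rank $\rk\fSO_n$. Keeping track of these corrections while ensuring that the induction closes and isolates exactly the six exceptional types — and checking that certain combinatorially exceptional dual partitions do not in fact arise from nilpotents, as happened with $(2,2,1)$ in Proposition~\ref{spn} — is the delicate part of the argument.
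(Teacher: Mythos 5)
Your plan matches the paper's proof essentially step for step: apply the dimension bound of Proposition~\ref{prop1}(2) together with the Kraft--Procesi formula~\eqref{ine1}, reduce to a combinatorial inequality in the dual partition $\mu$ (the orthogonal analogue of~\eqref{in2}, with the parity roles and the sign of the linear term flipped), and prove it by induction on $\ell(\mu)$, peeling off one or two parts according to parity and isolating the exceptional dual partitions $(2,2),(2,2,2),(2,2,2,2),(3,2),(4,2),(4,4)$ in the base cases, while discarding combinatorial exceptions such as $(\mu_1,2,1)$ that violate the even-multiplicity admissibility constraint. Your observations that no central-torus correction is needed here and that the admissibility constraint forces $\mu_i-\mu_{i+1}$ even for even $i$ are both correct and are exactly how the paper's argument proceeds.
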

\begin{proof}
By ~Proposition~\ref{prop1}(2) and~\eqref{ine1}, it is enough to check that 
\begin{multline}
\sum_{i=1}^s \mu_i^2-2\mu_1-2\sum_{2\mid i} \mu_i>\sum_{i=1}^s (\mu_i-\mu_{i+1})^2\label{ine2}\\
\Rightarrow\sum_{i=1}^s \mu_i^2-\sum_{i=1}^s(-1)^{i+1}\mu_i>\sum_{i=1}^s (\mu_i-\mu_{i+1})^2-2\sum_{i=2}^s (-1)^{i+1}\mu_i+\sum_{i=1}^s\mu_i\geq2\dim Q+2\rk(\fSp_{2n})+2\rk(Q)\\
\Rightarrow\dim \fSp_{2n}\times S_e > \dim \fSp_{2n}+\dim Q+\rk(\fSp_{2n})+\rk(Q).
\end{multline}
We will check that this inequality holds true for every partition $\mu$ corresponding to a nilpotent
element in $\fso_{n}$ except for partitions of hook type, $(2^2),\ (2^3),\ (2^4),\ (4^2),\ (3,2)$
and $(4,2)$, by induction on the length of the partition $\mu$.
\par
Let us check the base of induction.
The case $\mu=(\mu_1)$ corresponds to the zero nilpotent and~\eqref{ine2} is not true.

Let $\mu=(\mu_1,\mu_2,\mu_3)$, where $\mu_3$ may be zero. In this case~\eqref{ine2} takes the
following form:
\begin{equation}
2\mu_1\mu_2+2\mu_2\mu_3>\mu_2^2+\mu_3^2+2\mu_1+2\mu_2.\label{ine3}\\
\end{equation}
Note that $\mu_1\mu_2\geq\mu_2^2$ and $\mu_2\mu_3\geq\mu_3^2$ since $\mu_1\geq \mu_2\geq\mu_3$,
and it is enough to check that
\[\mu_1\mu_2+\mu_2\mu_3>2\mu_1+2\mu_2.\]
This is true for $\mu_3>2$. Assume that $\mu_3=2$. Then~\eqref{ine3} takes the form
\[2\mu_1(\mu_2-1)>\mu_2^2-2\mu_2+4.\]
This is true for $\mu_2>2$. If $\mu_2=\mu_1=2$ then the unique exceptional case is $(2^3)$.
If $\mu_3=1$, then~\eqref{ine3} takes the form
\[2\mu_1(\mu_2-1)>\mu_2^2+1.\]
This is true for $\mu_2>2$. So the exceptional cases are the hook partitions and
$(\mu_1,2,1)$ (but there are no nilpotents of such type in $\fso_n$).
If $\mu_3=0$, then~\eqref{ine3} has the form
\[2\mu_1(\mu_2-1)>\mu_2^2+2\mu_2.\]
This is true for $\mu_2>4$. So the exceptional cases are the hook partitions and
$(4,4),\ (4,2),\ (3,2),\ (2,2)$.

Let us check the step of induction. Again, there will be two different situations.
First, let $\mu=(\mu_1,\mu_2,...,\mu_s,\mu_{s+1},\mu_{s+2})$ be the dual partition corresponding
to a nilpotent element, where $s+2$ is odd. Then the partition $(\mu_1,\mu_2,...,\mu_s)$ corresponds
to a nilpotent element as well. So by induction it suffices to check that 
\[\mu_{s+1}^2+\mu_{s+2}^2-2\mu_{s+1}\geq(\mu_s-\mu_{s+1})^2+(\mu_{s+1}-\mu_{s+2})^2+\mu_{s+2}^2-\mu_s^2.\]
As in~\eqref{in4}, this inequality is true for every $\mu_{s},\mu_{s+1},\mu_{s+2}$ and it is equality
if and only if $\mu_{s}=\mu_{s+1}=\mu_{s+2}=1$.

Second, let $\mu=(\mu_1,\mu_2,...,\mu_s,\mu_{s+1})$ be the dual partition corresponding to a
nilpotent element, where $s+1$ is even. Then the partition $(\mu_1,\mu_2,...,\mu_s)$ corresponds
to a nilpotent element as well. So by induction it suffices to check that 
\[\mu_{s+1}^2-2\mu_{s+1}\geq(\mu_s-\mu_{s+1})^2+\mu_{s+1}^2-\mu_s^2.\]
As in~\eqref{in5}, this inequality is true for every $(\mu_s,\mu_{s+1})$ except for $(1,1)$ (but there are no nilpotents of such type with
$\mu_s=\mu_{s+1}=1$) and it is an equality if and only if $\mu_s=\mu_{s+1}=2$. This completes the proof.
\end{proof}

\begin{remark}
  {\em Under the classical isomorphism $\fso_4\cong{\mathfrak{sl}}_2\oplus{\mathfrak{sl}}_2$,
    a nilpotent element of type $(2^2)$ goes to a nilpotent element of type $(2)\oplus(1^2)$.
    Under the classical isomorphism $\fso_6\cong{\mathfrak{sl}}_4$, a nilpotent element of type
    $(3^2)$ (resp.\ $(2^2,1^2)$) goes to a nilpotent element of type $(3,1)$ (resp.\ $(2,1^2)$).
    Under the classical isomorphism $\fso_5\cong\fsp_4$, a nilpotent element of type
    $(2^2,1)$ goes to a nilpotent element of type $(2,1^2)$.
    Under a triality outer automorphism of $\fso_8$,  a nilpotent element of type
    $(4^2)$ (resp.\ $(2^4)$) goes to a nilpotent element of type $(5,1^3)$
    (resp.\ $(3,1^5)$).\footnote{We are grateful to R.~Travkin for this observation.}
    So all the ``exceptional'' cases 
  in~Proposition~\ref{son} are of hook type in the appropriate classical Lie algebras.}
\end{remark}

\subsection{Other nilpotents in exceptional Lie algebras}
\label{sec:otherexcept}
Scanning the tables in~\cite[Chapter 22]{LS}, we check that the inequality
$\dim(G\times S_e)>2\dim B_{G\times Q}$ is always satisfied for exceptional groups $G$ except for the
cases when $e$ is zero or regular (see~\S\ref{eqsl}) and a single case considered in~\S\ref{sec:g2}.





\end{document}